\documentclass[11pt]{article}
\usepackage{amsthm, amsmath, amssymb, amsfonts, url, booktabs, tikz, setspace, fancyhdr, bm}
\usepackage{geometry}
\usepackage{hyperref, enumerate}
\usepackage[shortlabels]{enumitem}
\usepackage[babel]{microtype}
\usepackage[english]{babel}
\usepackage[capitalise]{cleveref}
\usepackage{comment}
\usepackage{bbm}
\usepackage{csquotes}
\usepackage{mathabx}
\usepackage{tikz}
\usepackage{graphicx}
\usepackage{float}
\usepackage{xcolor}
\usepackage{mathtools}
\usepackage{mathrsfs}
\usetikzlibrary{positioning, arrows.meta, shapes.geometric}

\counterwithin{figure}{section}

\newtheorem{theorem}{Theorem}[section]
\newtheorem{prop}[theorem]{Proposition}
\newtheorem{lemma}[theorem]{Lemma}
\newtheorem{cor}[theorem]{Corollary}

\newtheorem{claim}[theorem]{Claim}

\usetikzlibrary{decorations.pathmorphing}

\theoremstyle{definition}
\newtheorem{defn}[theorem]{Definition}
\newtheorem*{defn-non}{Definition}

\newlist{Case}{enumerate}{2}
\setlist[Case, 1]{%
    label           =   {\bfseries Case \arabic*.},
    labelindent=1em ,labelwidth=1.3cm, labelsep*=1em, leftmargin =!
}
\setlist[Case, 2]{%
    label           =   {\bfseries Subcase \arabic{Casei}.\arabic*.},
    labelindent=-1em ,labelwidth=1.3cm, labelsep*=1em, leftmargin =!
}

\newenvironment{poc}{\begin{proof}[Proof of the claim]}{\end{proof}}

\usepackage{todonotes}

\title{Improvement on the Erd\H{o}s-Kleitman conjecture via the KKL theorem} 
\author{
Gennian Ge\thanks{School of Mathematical Sciences, Capital Normal University, Beijing, China. Email: gnge@zju.edu.cn. Gennian Ge is supported by the National Key Research and Development Program of China under Grant 2025YFC3409900, the National Natural Science Foundation of China under Grant 12231014, and Beijing Scholars Program.}
\and 
Jialuo Wang\thanks{School of Mathematical Sciences, University of Science and Technology of China,
Hefei, China. Email: 673391676@qq.com}
\and
Zixiang Xu\thanks{Extremal Combinatorics and Probability Group (ECOPRO), Institute for Basic Science (IBS), Daejeon, South Korea. Email: zxxu8023@qq.com. Supported by IBS-R029-C4.}
}
\date{}
\begin{document}
\maketitle
\begin{abstract}
In 1974, Erd\H{o}s and Kleitman conjectured that if a family \(\mathcal{F}\subseteq 2^{[n]}\) contains no matching of size \(s\) and is maximal with respect to this property, then
\(
|\mathcal{F}|\ge \left(1-2^{-(s-1)}\right)\cdot 2^{n}.
\)
For decades, the best general lower bound remained the trivial \(2^{n-1}\). About a decade ago, Frankl and Tokushige emphasized that obtaining a bound of the form \(\left(\frac{1}{2}+\varepsilon\right)\cdot 2^n\) for some \(\varepsilon>0\) is a challenging problem. A breakthrough of Buci\v{c}, Letzter, Sudakov and Tran in 2018 showed that
\(
|\mathcal{F}|\ge \left(1-\frac{1}{s}\right)\cdot 2^n
\)
via two very elegant and quite different approaches. Our main result shows that \[
|\mathcal{F}|\ge \left(
      1 - \frac{1}{s + (s-2)\frac{\log n}{2\sqrt{5}n}}
   \right)\cdot 2^n 
\] by exploiting a connection to the cornerstone result of Kahn, Kalai and Linial on influences of Boolean functions.
Independently, we can also obtain a weaker improvement combining the linear algebra method with a combinatorial twist.

\end{abstract}

\section{Introduction}
Let $[n]:=\{1,2,\dots,n\}$. A \emph{matching} of size $s$ contains $s$ pairwise disjoint sets. For given positive integer \(s\ge 2\), we call a family $\mathcal{F}\subseteq 2^{[n]}$ \(s\)-saturated if it contains no \(s\) pairwise disjoint sets, and moreover no set can be added to $\mathcal{F}$ while preserving this property. An old conjecture of Erd\H{o}s and Kleitman~\cite{1974ErdosKleitman} asserts that every $s$-saturated family has size at least
\[
\bigl(1-2^{-(s-1)}\bigr)\cdot 2^{n}.
\]
For decades, the best general lower bound remained the trivial \(2^{n-1}\), which follows from results in~\cite{1984ErdosDM,1961EKR} implying that any maximal matching-free family occupies at least half of the discrete cube. In particular, Frankl and Tokushige pointed out in~\cite{2016JCTAFrankl} that obtaining a bound of the form \((\tfrac{1}{2}+\varepsilon)\cdot 2^n\) for some \(\varepsilon>0\) already appears to be a difficult problem. A breakthrough was achieved in 2018 by Buci\v{c}, Letzter, Sudakov and Tran~\cite{2018BLMS}, who established such an improvement and went significantly beyond the trivial bound.
\begin{theorem}[\cite{2018BLMS}]\label{thm:2018BLMS}
    Let \(s\ge 2\) be an integer, and let \(\mathcal{F}\subseteq 2^{[n]}\) be \(s\)-saturated. Then
    \[
        |\mathcal{F}|\ge \Bigl(1-\frac{1}{s}\Bigr)\cdot 2^{n}.
    \]
\end{theorem}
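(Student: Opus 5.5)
One route is an averaging argument over a random partition of $[n]$. Recall that an $s$-saturated family $\mathcal{F}$ is an up-set: if $A\in\mathcal{F}$ and $A\subseteq B$, then adding $B$ cannot create a matching of size $s$ that $A$ would not already create. Moreover, saturation gives the key structural fact:
\[
A\notin\mathcal{F}\quad\Longrightarrow\quad [n]\setminus A \text{ contains } s-1 \text{ pairwise disjoint members of } \mathcal{F}.
\]
Write $\mathcal{G}=2^{[n]}\setminus\mathcal{F}$. This is a down-set, and the goal is $|\mathcal{G}|\le 2^n/s$.

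\textbf{Sanity check at $s=2$.} Here $\mathcal{F}$ is a maximal intersecting family, so $A\notin\mathcal{F}$ if and only if $[n]\setminus A\in\mathcal{F}$. This gives $|\mathcal{G}|=2^{n-1}$. The plan is to generalise this complementation to an $s$-fold symmetry.

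\textbf{Setup.} Colour each element of $[n]$ independently and uniformly with one of $s$ colours, producing a partition $P_1,\dots,P_s$. Each $P_i$ is distributed as a $p$-biased random set with $p=1/s$, not as a uniform set. So I would instead couple a uniform random set $X$ with an $s$-tuple of sets whose relevant members are uniform. Concretely, take $X$ uniform and independently a uniform colouring $c$ of $[n]$ into $s-1$ colours. Consider the $s$ sets
\[
X,\qquad Y_j=([n]\setminus X)\cap c^{-1}(j)\quad (j=1,\dots,s-1).
\]
These are pairwise disjoint, so not all of them lie in $\mathcal{F}$.

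\textbf{Main step: an injection.} I want to show
\[
\Pr[X\notin\mathcal{F}]\le \tfrac1s .
\]
The plan is to use saturation to build a $(s-1)$-to-$1$-type injection from configurations with $X\notin\mathcal{F}$ into configurations where $X\in\mathcal{F}$. Suppose $X\notin\mathcal{F}$. Saturation gives disjoint sets $F_1,\dots,F_{s-1}\in\mathcal{F}$ inside $[n]\setminus X$. Enlarging them using the up-set property, we may assume they partition $[n]\setminus X$. Each $F_j$ (in fact $F_j\cup X$ together with appropriate subsets) is then a set in $\mathcal{F}$ that is disjoint from many non-members.

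The intended counting is a double count of pairs $(A,\pi)$, where $A\in\mathcal{G}$ and $\pi$ is an ordered partition of $[n]\setminus A$ into $s-1$ members of $\mathcal{F}$. This is compared with pairs $(B,\pi')$, where $B\in\mathcal{F}$ is obtained by merging $A$ into one block. Each such merge gives a set of $\mathcal{F}$, because it contains an $\mathcal{F}$-block. The aim is to show that each $\mathcal{F}$-set arises at least $s-1$ times as often as each $\mathcal{G}$-set is used. That would give
\[
(s-1)|\mathcal{G}|\le|\mathcal{F}|,
\]
which is exactly $|\mathcal{F}|\ge(1-\tfrac1s)2^n$.

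\textbf{Main obstacle.} The delicate point is controlling multiplicities in this double count. A given $A\in\mathcal{G}$ may admit very different numbers of decompositions of its complement. To handle this, I would first try to choose a canonical decomposition via a fixed ordering, for instance a lexicographically minimal $F_1$, then $F_2$, and so on. I would then verify that the $s-1$ merges $A\cup F_j$ are distinct and land injectively, so that no set of $\mathcal{F}$ is hit by more than one $A$ for a fixed $j$. If canonical choices fail to be injective, the fallback is a weighted version. In that version each $A\in\mathcal{G}$ spreads unit weight over all its decompositions, and one uses the down-set property of $\mathcal{G}$ to bound the total weight received by each $B\in\mathcal{F}$ by $1/(s-1)$. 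This is in the spirit of a fractional matching or LP-duality argument.
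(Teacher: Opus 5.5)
Your reduction is sound. The target $(s-1)|\mathcal{F}^c|\le|\mathcal{F}|$ is equivalent to the theorem, and the saturation fact you use is the paper's identity $\overline{\mathcal{F}^{c}}=\mathcal{F}^{\Box(s-1)}$. But the inequality itself is never proved: everything is deferred to the ``Main step'', and the map proposed there cannot work.

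Once $F_1,\dots,F_{s-1}$ partition $[n]\setminus A$, the merged set is $A\cup F_j=[n]\setminus\bigsqcup_{i\neq j}F_i$. This is determined by the other $s-2$ blocks and remembers nothing about $A$.
\begin{itemize}
\item For $s=2$ it is the constant map $A\mapsto[n]$. The bijection there is complementation, not merging.
\item For $s=3$, take the $3$-saturated family $\mathcal{F}=\{F:F\cap\{1,2\}\neq\emptyset\}$. A lexicographically minimal rule makes one block the singleton $\{1\}$ or $\{2\}$ (depending on the convention) and the other block the rest. Merging into the big block then sends every $A\in\mathcal{F}^c$ to one and the same set.
\item Your uniform fractional fallback fails on the same example. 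Suppose each $A\in\mathcal{F}^c$ spreads unit weight evenly over its pairs (ordered decomposition, merged block). Then $B=[n]\setminus\{2\}$ receives $\sum_{A\subseteq[n]\setminus\{1,2\}}2\cdot 2^{|A|-n}=2^{1-n}3^{n-2}$, far more than $\tfrac12$. Nothing in the proposal indicates how the down-set property would yield a better weighting.
\end{itemize}
So controlling multiplicities is not a detail to be patched by a canonical ordering; it is the whole problem. Recall that even $(\tfrac12+\varepsilon)2^n$ was regarded as hard before \cite{2018BLMS}. The random-colouring set-up is also never used. The fact that $X,Y_1,\dots,Y_{s-1}$ are not all in $\mathcal{F}$ only gives $\Pr[X\notin\mathcal{F}]+\sum_j\Pr[Y_j\notin\mathcal{F}]\ge 1$, a lower bound that points the wrong way.

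The paper obtains $s\,|\overline{\mathcal{F}^{c}}|\le 2^n$ without any injection, in one of two ways.

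\emph{Linear algebra (Claim~\ref{claim:SudakovsBound}).} Write each $G\in\overline{\mathcal{F}^{c}}$ as $G=A_1\sqcup\dots\sqcup A_{s-1}$ with $A_i\in\mathcal{F}$. For every $k\in\{0,\dots,s-1\}$, $G$ receives the function $w_{k,G}=\prod_{j\in X}x_j\prod_{j\in G\setminus X}(1-x_j)$, where $X$ is a union of $k$ of the blocks. Functions with different $k$ multiply to zero on $\{0,1\}^n$, because the absence of an $s$-matching forces a factor $x_a(1-x_a)$ (Claim~\ref{claim:orthogonal of distinct w_k}). For fixed $k$ they are independent: $w_{k,G}$ contains the monomial $p_G$, which occurs in no $w_{k,G'}$ with $G\not\subseteq G'$, so a maximal $G$ with nonzero coefficient cannot cancel. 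This gives $s|\overline{\mathcal{F}^{c}}|$ independent vectors in a $2^n$-dimensional space.

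\emph{Correlation inequality.} Claim~\ref{prop:bound diversity} is built on Lemma~\ref{lem:ineq of disjoint occ} and Claim~\ref{prop:inclusion}. Combine it with the trivial bound $|\mathcal{F}^{\Box(s-1)}(\overline{x})|\le\frac12|\mathcal{F}^{\Box(s-1)}|$, valid for any increasing family. With $N=|\mathcal{F}^{\Box(s-1)}|$ this yields $(s-1)N\le\frac{s-2}{2}N+2^{n-1}$, i.e.\ $N\le 2^n/s$.

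One of these global ingredients is exactly what your unconstructed multiplicity control would have to replace: either the orthogonality of the $s$ sign patterns attached to each $G$, or the disjoint-occurrence inequality $|\mathcal{A}\Box\mathcal{B}|\le|\overline{\mathcal{A}}\cap\mathcal{B}|$.
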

In~\cite{2018BLMS}, the authors obtained such an improvement by means of two conceptually different proofs: one exploits an intriguing connection to correlation inequalities, while the other is algebraic in nature. In particular, they remarked in~\cite{2018BLMS} that a further improvement of their lower bound should be possible by running the argument of their proof more carefully. Our main result strengthens their bound by combining this correlation-inequality approach with the celebrated Kahn-Kalai-Linial influence inequality for Boolean functions~\cite{1988kahanKalaiLinial}.

\begin{theorem}\label{thm:ViaKKL}
    Let \(s\ge 2\) be an integer, and let \(\mathcal{F}\subseteq 2^{[n]}\) be an \(s\)-saturated family. Then
  \[
   |\mathcal{F}|
   \;\ge\;
   \left(
      1 - \frac{1}{\,s + (s-2)\frac{\log n}{2\sqrt{5}n}\,}
   \right)\cdot 2^n.
\]
\end{theorem}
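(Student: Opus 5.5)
The plan is to rerun the correlation-inequality proof of \cref{thm:2018BLMS} and replace the one application of the Harris--Kleitman inequality by a quantitative version that keeps track of influences. Set $\mathcal{D}=2^{[n]}\setminus\mathcal{F}$ and write $\mu$ for the uniform measure on $2^{[n]}$, so that $\mu(\mathcal{D})=1-|\mathcal{F}|/2^n$. Saturation gives two facts.
\begin{itemize}
\item $\mathcal{F}$ is an up-set and $\mathcal{D}$ is a down-set, since a superset of a set in $\mathcal{F}$ cannot create a new matching.
\item Every $A\in\mathcal{D}$ extends to a matching $A,B_1,\dots,B_{s-1}$ with all $B_i\in\mathcal{F}$.
\end{itemize}
As I recall it, the argument of \cite{2018BLMS} builds from $\mathcal{F}$ a collection of monotone families indexed by the $s$ ``slots'' of a matching. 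One uses a uniformly random ordered partition, or a random set $X$ together with its complement. The matching-free property forces one event always to occur, while saturation forces the complementary event to be split evenly among the slots. The single lossy step is Harris--Kleitman, $\mu(\mathcal{U}\cap\mathcal{V})\ge\mu(\mathcal{U})\mu(\mathcal{V})$ for two up-sets, and that is where $\mu(\mathcal{D})\le 1/s$ comes from. The first step is to write that argument out so that the error is visible. The intended output is an inequality of the form
\[
 s\,\mu(\mathcal{D})\le 1-\sum_{j}\operatorname{Cov}(f_j,g_j),
\]
where $f_j,g_j$ are indicators of monotone families built from $\mathcal{F}$ (shifts and restrictions of $\mathcal{F}$ and of its dual). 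Since all of these covariances are nonnegative by Harris, this recovers \cref{thm:2018BLMS}.

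The second step is to bound the covariance terms from below via influences. I would use Talagrand's correlation inequality. For increasing $f,g$ it gives
\[
\operatorname{Cov}(f,g)\ge c\,\frac{\sum_i I_i(f)I_i(g)}{\log\bigl(e/\sum_i I_i(f)I_i(g)\bigr)}.
\]
A simpler alternative is the elementary bound $\operatorname{Cov}(f,g)\ge \tfrac14 I_i(f)I_i(g)$ for a single well-chosen coordinate $i$, which follows from conditioning on $x_i$ and applying Harris to each half. In the relevant families the two functions are closely tied to $\mathcal{F}$, and often coincide with it up to complementation or duality. So I expect to reduce to a lower bound on $\max_i I_i(\mathbf 1_{\mathcal{F}})^2$. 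KKL gives
\[
\max_i I_i(\mathbf 1_{\mathcal{F}})\ge c\,\operatorname{Var}(\mathbf 1_{\mathcal{F}})\frac{\log n}{n}, \qquad \operatorname{Var}(\mathbf 1_{\mathcal{F}})=\mu(\mathcal{D})\bigl(1-\mu(\mathcal{D})\bigr).
\]
Since $\mu(\mathcal{D})$ is already at least of order $2^{-s}$ and at most $1/s$, this gives a correction term of order $\mu(\mathcal{D})\log n/n$. The factor $(s-2)$ should appear because the correction is picked up in each of the $s-2$ Harris applications beyond the first. For $s=2$ there is no loss, which is consistent with the tight example of a maximal intersecting family, of size exactly $2^{n-1}$. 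The constant $2\sqrt5$ would come from the explicit KKL constant combined with the factor lost in the covariance bound.

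The last step is bookkeeping. I expect to obtain
\[
s\,\mu(\mathcal{D})\le 1-(s-2)\frac{\log n}{2\sqrt5\,n}\,\mu(\mathcal{D}),
\]
and rearranging gives
\[
\mu(\mathcal{D})\le\frac{1}{s+(s-2)\frac{\log n}{2\sqrt5 n}},
\]
which is the statement.

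The main obstacle is the second step. The functions entering the covariances are not literally $\mathbf 1_{\mathcal{F}}$ but derived families. I must show that one coordinate has large influence in both members of each pair simultaneously, so that the product of influences is not small. My first attempt will be to choose the pairs in the \cite{2018BLMS} argument so that each is $(\mathbf 1_{\mathcal{F}},\mathbf 1_{\mathcal{F}'})$ with $\mathcal{F}'$ a symmetric image of $\mathcal{F}$, making the influence profiles equal. If that fails, I will use Talagrand's inequality, which only needs $\sum_i I_i(f)I_i(g)$ to be large. I would then bound that sum below by the maximal influence of $\mathcal{F}$ given by KKL, using monotonicity to relate $I_i(\mathcal{F}')$ to $I_i(\mathcal{F})$.
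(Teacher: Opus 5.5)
\textbf{There is a genuine gap.} The proposal never derives the inequality everything rests on. The bound $s\,\mu(\mathcal{D})\le 1-\sum_j\operatorname{Cov}(f_j,g_j)$ is only postulated, the families $f_j,g_j$ are never identified, and the factors $(s-2)$ and $2\sqrt5$ are guessed rather than computed.

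The recollection behind your first step is also off. In the argument you are reconstructing (and in this paper), the bound $1/s$ does not come from a Harris--Kleitman step whose slack is a covariance. It comes from the identity $\overline{\mathcal{F}^c}=\mathcal{F}^{\Box(s-1)}$ and from telescoping Lemma~\ref{lem:ineq of disjoint occ}, $|\mathcal{A}\Box\mathcal{B}|\le|\overline{\mathcal{A}}\cap\mathcal{B}|$. Here one takes $\mathcal{A}=\mathcal{F}^{\Box(t+1)}$ and $\mathcal{B}=\mathcal{F}^{\Box(s-2-t)}$ for $0\le t\le s-2$.

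More seriously, even granting such an identity, your second step cannot deliver the stated bound. The elementary estimate $\operatorname{Cov}(f,g)\ge\tfrac14 I_i(f)I_i(g)$ combined with KKL gives a correction of order only $\mu(\mathcal{D})^2(\log n/n)^2$. You reduce to $\max_i I_i(\mathbf{1}_{\mathcal{F}})^2$ and then quote KKL for $\max_i I_i$ without squaring. Talagrand's inequality, fed with the sum-of-squares form of KKL, would give a covariance lower bound of order $\mu(\mathcal{D})^2\log n/n$ with an unspecified absolute constant. That loses a factor $\mu(\mathcal{D})$ and the explicit constant. It also still requires the derived families to share the influence profile of $\mathcal{F}$, which is exactly the obstacle you leave open. (Incidentally, $\mu(\mathcal{D})$ need not be of order $2^{-s}$. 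For $s\ge 3$ and $n=sm-1$, the family $\{A:|A|\ge m\}$ is $s$-saturated with $\mu(\mathcal{D})$ exponentially small in $n$.)

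\textbf{The missing idea.} You need an inequality in which a single section of $\mathcal{G}:=\mathcal{F}^{\Box(s-1)}=\overline{\mathcal{F}^c}$ appears linearly, so that an influence enters directly rather than through a product of influences.
\begin{enumerate}[label=(\roman*)]
\item Fix $x$ and split $\mathcal{G}$ into $\mathcal{G}(\overline{x})=\mathcal{F}(\overline{x})^{\Box(s-1)}$ and $\mathcal{G}(x)=\mathcal{F}(x)\Box\mathcal{F}(\overline{x})^{\Box(s-2)}$. Run the telescoping only on $\mathcal{G}(x)$, inside $2^{[n]\setminus\{x\}}$.
\item For each $t$, apply Lemma~\ref{lem:ineq of disjoint occ} to $\mathcal{A}=\mathcal{F}(x)\Box\mathcal{F}(\overline{x})^{\Box t}$ and $\mathcal{B}=\mathcal{F}(\overline{x})^{\Box(s-2-t)}$. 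Combine it with the containment $\overline{\mathcal{A}^c}\supseteq\mathcal{F}(\overline{x})^{\Box(s-1-t)}$, which matching-freeness forces.
\item Summing over $0\le t\le s-2$ gives $(s-1)|\mathcal{G}|\le(s-2)|\mathcal{G}(\overline{x})|+2^{n-1}$ for every $x$ (Claim~\ref{prop:bound diversity}). With the trivial estimate $|\mathcal{G}(\overline{x})|\le|\mathcal{G}|/2$ this is just $s|\mathcal{G}|\le 2^n$.
\item Since $\mathcal{G}$ is increasing, the number of pivotal pairs at $x$ is exactly $|\mathcal{G}|-2|\mathcal{G}(\overline{x})|$. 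Since $\mathcal{G}$ is intersecting, $|\mathcal{G}|\le 2^{n-1}$. So KKL applied to $\mathbf{1}_{\mathcal{G}}$ gives some $x$ with $2|\mathcal{G}(\overline{x})|\le\bigl(1-\frac{\log n}{2\sqrt5\,n}\bigr)|\mathcal{G}|$. The influences of $\mathbf{1}_{\mathcal{G}}$ coincide coordinatewise with those of $\mathbf{1}_{\mathcal{F}}$, so your instinct to apply KKL to $\mathcal{F}$ was sound.
\item Substituting yields $\bigl(s+(s-2)\frac{\log n}{2\sqrt5\,n}\bigr)|\mathcal{G}|\le 2^n$, which is exactly your target inequality.
\end{enumerate}
In this argument, $(s-2)$ is simply the coefficient of the section. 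The constant $2\sqrt5$ is the KKL constant $\sqrt5$ times the factor $2$ between normalizing influence by $2^{n-1}$ and measure by $2^n$.
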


In addition, we refine the algebraic method used in~\cite{2018BLMS} by introducing a new combinatorial twist that yields an additional exponential contribution. For integers \(n\ge 1\) and \(2\le s\le n+1\), define \[
g(n,s):=\max\left\{t\in\{0,1,\dots,n\}:
\sum_{i=0}^{t}\binom{n}{i}
\le
\frac{2^{n}-(2^{s-1}-s)\,2^{t}}{s}
\right\}.
\]
\begin{theorem}\label{thm:main}
    For positive integer \(s\ge 2\), let \(\mathcal{F}\subseteq 2^{[n]}\) be an \(s\)-saturated family. Then 
    \[|\mathcal{F}|\ge \bigg(1-\frac{1}{s}\bigg)\cdot 2^{n} + \bigg(\frac{2^{s-1}}{s}-1\bigg)\cdot 2^{g(n,s)}. \]
\end{theorem}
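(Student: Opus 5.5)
The plan is to prove the equivalent upper bound on the complementary family. Let $\mathcal{G}:=2^{[n]}\setminus\mathcal{F}$. Saturation makes $\mathcal{F}$ an up-set. Moreover, for every $A\in\mathcal{G}$ there are pairwise disjoint $B_1,\dots,B_{s-1}\in\mathcal{F}$, all disjoint from $A$. Enlarging one of them, we may assume $A,B_1,\dots,B_{s-1}$ partition $[n]$. The statement is then equivalent to
\[
s\,|\mathcal{G}| \;\le\; 2^n-(2^{s-1}-s)\,2^{t}\qquad\text{for } t=g(n,s).
\]
First I will re-run the algebraic proof of \cref{thm:2018BLMS} from~\cite{2018BLMS} in the form I need. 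Each $A\in\mathcal{G}$ is encoded by a vector or polynomial over $\mathbb{F}_2$ (or $\mathbb{R}$) built from its witnessing partition $(A,B_1,\dots,B_{s-1})$. Unions of parts that contain some $B_i$ lie in $\mathcal{F}$, while sub-unions of $A$ lie in $\mathcal{G}$ since $\mathcal{G}$ is a down-set. This gives $s$ families of vectors, one per ``colour'' of the partition, and a triangular (cross-intersection type) condition makes all $s|\mathcal{G}|$ of them linearly independent in the $2^n$-dimensional space of functions on the cube. That recovers $s|\mathcal{G}|\le 2^n$.

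The combinatorial twist is to exhibit $(2^{s-1}-s)2^t$ further independent functions that lie outside the span of the $s|\mathcal{G}|$ vectors above. The number $2^{s-1}-s$ is exactly the count of subsets of $\{B_1,\dots,B_{s-1}\}$ of size at least $2$. So for a fixed partition, the unions $B_{i_1}\cup\dots\cup B_{i_k}$ with $k\ge 2$ are members of $\mathcal{F}$ that are \emph{not} used by the basic injection, which only uses $A$ and the single $B_i$'s. The factor $2^t$ should come from placing these configurations inside a $t$-dimensional subcube: fix a set $T$ of $t$ coordinates and vary the partition only on $T$, which produces $2^t$ disjoint copies of the $2^{s-1}-s$ extra sets.

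To make the extra functions independent from the main ones, I will use a degree argument. The main vectors can be arranged to lie in, or be tested against, the space of multilinear polynomials of degree at most $t$, which has dimension $\sum_{i\le t}\binom{n}{i}$. The defining inequality of $g(n,s)$,
\[
\sum_{i=0}^{t}\binom{n}{i}\le \frac{2^n-(2^{s-1}-s)2^t}{s},
\]
is precisely what lets me argue by contradiction. If $s|\mathcal{G}|>2^n-(2^{s-1}-s)2^t$, then $|\mathcal{G}|$ exceeds this dimension. Hence some nonzero degree-$\le t$ polynomial vanishes on a suitable subset, or equivalently some $A\in\mathcal{G}$ has a witnessing partition whose $B$-parts can be chosen to meet a prescribed $t$-set $T$ freely. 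Applying this $2^t$ times over the subcube on $T$ supplies the extra independent vectors, so the count exceeds $2^n$, a contradiction.

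I expect the main obstacle to be this last step: proving that the union-type sets $B_{i}\cup B_j\cup\cdots$ really give vectors independent of the $s|\mathcal{G}|$ basic ones and of each other across the $2^t$ subcube copies. Overlaps between witnessing partitions of different $A$'s could create dependencies. I would first try to order all sets by size or by a lexicographic rule on $T$ so that the combined evaluation matrix becomes upper triangular. If that fails, I would pass to a single extremal $A$, of minimum size in $\mathcal{G}$, and build the extra $(2^{s-1}-s)2^t$ sets only from its partition. This keeps the triangular structure, and the polynomial-dimension inequality is then used only to guarantee that such an $A$ exists with enough freedom on $T$.
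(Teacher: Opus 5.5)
\textbf{There is a genuine gap: the construction of the extra functions and the proof of their independence are missing.} Your reduction to $s|\mathcal{G}|\le 2^n-(2^{s-1}-s)2^{t}$ is sound, and so is the picture of a $2^t$-element subcube on which the witnessing partitions barely change. But the core step --- which extra functions to take, and why they are independent of all the others --- is absent, and the devices you propose for it do not work.

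\emph{The degree argument is a wrong turn.} The BLST function $\prod_{j\in X}x_j\prod_{j\in Y}(1-x_j)$ of a non-member $A$ (with $X\sqcup Y=[n]\setminus A$ a union of its $B$-parts) has top monomial $p_{[n]\setminus A}$. So there is no low-degree structure to exploit. Also, $|\mathcal{G}|>\sum_{i\le t}\binom{n}{i}$ points the wrong way for producing a vanishing polynomial, and such a polynomial would say nothing about partitions anyway.

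\emph{What the inequality defining $g(n,s)$ is actually for.} It is only needed for a trivial dichotomy. If every non-member of $\mathcal{F}$ has fewer than $t$ elements, then $|\mathcal{G}|\le\sum_{i<t}\binom{n}{i}$ and you are done. Otherwise some non-member $A_0$ has $|A_0|\ge t$, and all of its at least $2^t$ subsets are non-members. So your fallback, an element of \emph{minimum} size in the down-set $\mathcal{G}$, is the wrong extremal choice.

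\emph{Ordering alone cannot work.} A BLST-style triangular (top-monomial) ordering cannot separate different sign patterns attached to the same $A$, since they all share the top monomial $p_{[n]\setminus A}$. Independence there has to come from orthogonality.

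\textbf{The missing construction.} Fix the partition $(A_0,B_1,\dots,B_{s-1})$. Give every $A\subseteq A_0$ a partition $(A,C_1^A,\dots,C_{s-1}^A)$ with $B_j\subseteq C_j^A$, by adding $A_0\setminus A$ to one part.
\begin{enumerate}
\item At level $k$ (factors $x_j$ on $k$ parts, $1-x_j$ on the other $s-1-k$), use for each such $A$ all $\binom{s-1}{k}$ choices $S\subseteq[s-1]$ of the $x$-parts. For $A,A'\subseteq A_0$ and $S\neq S'$, the two functions multiply to zero pointwise through $x_b(1-x_b)$ with $b\in B_h$, $h\in S\setminus S'$.
\item Summed over $k$, this gives $\sum_k\bigl(\binom{s-1}{k}-1\bigr)=2^{s-1}-s$ extra functions per $A$. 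They are not ``unions of at least two $B_i$'s'': the basic level-$k$ functions already put $x_j$ on a union of $k$ parts.
\item The idea you lack entirely is how to keep the other non-members from interfering. Since $\mathcal{F}$ has no $s$-matching, each $B_h$ meets some part of the partition $(A,D_1,\dots,D_{s-1})$ of any non-member $A\not\subseteq A_0$. So you can label that partition so that $D_1\cup\dots\cup D_k$ meets each of $B_1,\dots,B_k$.
\item With this labelling, the level-$k$ function of $A$ is orthogonal to the pattern-$S$ function of every $A'\subseteq A_0$ whenever $S\neq[k]$. Indeed, take $u\in[k]\setminus S$; the part $C_u^{A'}\supseteq B_u$ carries $1-x_j$ and meets $D_1\cup\dots\cup D_k$.
\item The pattern-$[k]$ functions of the $A\subseteq A_0$, together with these labelled functions, form one block; each $S\neq[k]$ forms another. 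The blocks are mutually orthogonal and each contains at most one function per non-member. So BLST's top-monomial argument gives independence within each block.
\item Summing over the mutually orthogonal levels yields $s|\mathcal{G}|+(2^{s-1}-s)2^{t}\le 2^n$.
\end{enumerate}
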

We remark that the bound in~\cref{thm:main} is weaker than that of~\cref{thm:ViaKKL}, since asymptotically \(g(n,s)=\frac{n}{2}-c_s\sqrt{n}\) for some constant \(c_s>0\) depending only on \(s\). Nevertheless, we include the full proof, as the underlying idea may be of independent interest. In particular, the strategy of starting from a polynomial method and then enhancing it by exploiting additional combinatorial structure is not specific to the present setting, but it is also far from routine. Similar combinations of algebraic and combinatorial ingredients appear, for instance, in work on the \(k\)-neighborly box problem~\cite{1997NogaAlon,2013DM,2012HuangSudakov}, in stability results for Kleitman's theorem~\cite{2023GaoLiuXu}, in Alon-Babai-Suzuki type results~\cite{2026AiLiuABS,1991JCTAABS,2015EUJCYounjin,2018WangWeiGe} and in the study of the \(s\)-distance problem~\cite{1981Distance1,1983Distance2,2021PAMSdistance}.

Moreover, \cref{thm:2018BLMS} is derived in~\cite{2018BLMS} as a corollary of a stronger multipartite statement. Following their terminology, a sequence of families \(\mathcal{F}_1,\ldots,\mathcal{F}_s \subseteq 2^{[n]}\) is called \emph{cross \(s\)-saturated} if there are no \(s\) pairwise disjoint sets \(A_i \in \mathcal{F}_i\) for \(i\in[s]\), and the sequence is maximal with respect to this property. Buci\v{c}, Letzter, Sudakov and Tran~\cite[Theorem~5]{2018BLMS} proved that every cross \(s\)-saturated sequence satisfies
\[
|\mathcal{F}_1|+\cdots+|\mathcal{F}_s| \;\ge\; (s-1)\,2^n,
\]
which implies \cref{thm:2018BLMS} and is best possible, for example by taking \(\mathcal{F}_1=\emptyset\) and \(\mathcal{F}_2=\cdots=\mathcal{F}_s=2^{[n]}\). While this multipartite bound is sharp, the lower bound in \cref{thm:2018BLMS} for a single \(s\)-saturated family can always be improved by~\cref{thm:ViaKKL}. We view this discrepancy between the tight multipartite theorem and its non-tight single-family corollary as a rather interesting phenomenon, which has also appeared in other problems, such as the Erd\H{o}s-Ginzburg-Ziv problem~\cite{2021IsrLisa,sauermann2023erdhosginzburgzivproblemlargedimension}.
\section{Notation and useful tools}
For a family $\mathcal{A}\subseteq 2^{V}$, we write
\[
   \overline{\mathcal{A}}
   \coloneqq
   \{\, V\setminus A : A\in\mathcal{A}\,\}
   \quad\text{and}\quad
   \mathcal{A}^c
   \coloneqq
   2^{V}\setminus\mathcal{A}.
\]
In particular, 
\[
   \overline{\mathcal{A}^c}
   = (\overline{\mathcal{A}})^c
   = \{\, V\setminus A : A\notin\mathcal{A}\,\}.
\]
We say that a family \(\mathcal{F}\subseteq 2^{V}\) is \emph{increasing} if \(F\in\mathcal{F}\) implies that every superset \(F'\supseteq F\) also belongs to \(\mathcal{F}\). We now recall the notion of disjoint occurrence from~\cite{2018BLMS}.
\begin{defn}
Let $\mathcal{A},\mathcal{B}\subseteq 2^{[n]}$ be increasing families.  
The \emph{disjoint occurrence} of $\mathcal{A}$ and $\mathcal{B}$ is defined as
\[
  \mathcal{A}\;\Box\;\mathcal{B}
  \coloneqq 
  \{\,A\sqcup B : A\in\mathcal{A},\ B\in\mathcal{B},\ A\cap B=\emptyset\,\}.
\]
More generally, for a family $\mathcal{A}\subseteq 2^{[n]}$ and an integer $m\ge 1$, we denote by
$\mathcal{A}^{\Box m}$ the disjoint occurrence of $m$ copies of $\mathcal{A}$.  
In particular, we set $\mathcal{A}^{\Box 0}\coloneqq 2^{[n]}$.
\end{defn}
The following inequality for the disjoint occurrence plays a crucial role in~\cite{2018BLMS}.
\begin{lemma}[\cite{2018BLMS}]\label{lem:ineq of disjoint occ}
    Let $\mathcal{A},\mathcal{B}\subseteq 2^{[n]}$ be increasing families. Then $|\mathcal{A}\;\Box\; \mathcal{B}|\le |\overline{\mathcal{A}}\cap \mathcal{B}|$.
\end{lemma}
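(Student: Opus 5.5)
We would prove this by induction on \(n\), splitting according to whether a set contains the element \(n\). Since the right-hand side involves complements, we expect the recursion to pair the ``top'' part of one family with the ``bottom'' part of the other. The base case \(n=0\) is immediate: both sides equal \(1\) if \(\emptyset\in\mathcal{A}\cap\mathcal{B}\), and \(0\) otherwise.

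For the inductive step, for a family \(\mathcal{F}\subseteq 2^{[n]}\) define two families on \([n-1]\):
\[
\mathcal{F}_0=\{F\in\mathcal{F}: n\notin F\},\qquad \mathcal{F}_1=\{F\setminus\{n\}: n\in F\in\mathcal{F}\}.
\]
If \(\mathcal{F}\) is increasing, then \(\mathcal{F}_0,\mathcal{F}_1\) are increasing and \(\mathcal{F}_0\subseteq\mathcal{F}_1\).

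First we decompose the left-hand side. A set \(X=A\sqcup B\) with \(n\notin X\) uses \(A\in\mathcal{A}_0\) and \(B\in\mathcal{B}_0\). If \(n\in X\), the element \(n\) lies in exactly one of \(A\) or \(B\). Hence
\[
(\mathcal{A}\,\Box\,\mathcal{B})_0=\mathcal{A}_0\,\Box\,\mathcal{B}_0,\qquad (\mathcal{A}\,\Box\,\mathcal{B})_1=(\mathcal{A}_1\,\Box\,\mathcal{B}_0)\cup(\mathcal{A}_0\,\Box\,\mathcal{B}_1).
\]

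Next we decompose the right-hand side, with complements now taken in \([n-1]\). A set \(X\in\mathcal{B}\) with \([n]\setminus X\in\mathcal{A}\) either avoids \(n\) or contains \(n\).
\begin{itemize}
\item If \(n\notin X\), then \(X\in\mathcal{B}_0\) and its complement contains \(n\), so the complement of \(X\) in \([n-1]\) lies in \(\mathcal{A}_1\).
\item If \(n\in X\), then \(X\setminus\{n\}\in\mathcal{B}_1\), and its complement in \([n-1]\) lies in \(\mathcal{A}_0\).
\end{itemize}
Therefore
\[
|\overline{\mathcal{A}}\cap\mathcal{B}|=|\overline{\mathcal{A}_1}\cap\mathcal{B}_0|+|\overline{\mathcal{A}_0}\cap\mathcal{B}_1|.
\]

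The key step, and the only place where we must be careful, is absorbing the term \(|\mathcal{A}_0\Box\mathcal{B}_0|\). Disjoint occurrence is monotone in each argument, and \(\mathcal{A}_0\subseteq\mathcal{A}_1\), \(\mathcal{B}_0\subseteq\mathcal{B}_1\). Hence \(\mathcal{A}_0\Box\mathcal{B}_0\) is contained in both \(\mathcal{A}_1\Box\mathcal{B}_0\) and \(\mathcal{A}_0\Box\mathcal{B}_1\), and so in their intersection. By inclusion–exclusion,
\[
|\mathcal{A}\,\Box\,\mathcal{B}|
=|\mathcal{A}_0\Box\mathcal{B}_0|+|(\mathcal{A}_1\Box\mathcal{B}_0)\cup(\mathcal{A}_0\Box\mathcal{B}_1)|
\le |\mathcal{A}_1\Box\mathcal{B}_0|+|\mathcal{A}_0\Box\mathcal{B}_1|.
\]
All four families \(\mathcal{A}_0,\mathcal{A}_1,\mathcal{B}_0,\mathcal{B}_1\) are increasing on \([n-1]\), so the induction hypothesis applies to both terms on the right. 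This bounds them by \(|\overline{\mathcal{A}_1}\cap\mathcal{B}_0|+|\overline{\mathcal{A}_0}\cap\mathcal{B}_1|=|\overline{\mathcal{A}}\cap\mathcal{B}|\), completing the induction.
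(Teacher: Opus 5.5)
Your induction on $n$ is correct and complete. The identities $(\mathcal{A}\,\Box\,\mathcal{B})_0=\mathcal{A}_0\,\Box\,\mathcal{B}_0$, $(\mathcal{A}\,\Box\,\mathcal{B})_1=(\mathcal{A}_1\,\Box\,\mathcal{B}_0)\cup(\mathcal{A}_0\,\Box\,\mathcal{B}_1)$ and $|\overline{\mathcal{A}}\cap\mathcal{B}|=|\overline{\mathcal{A}_1}\cap\mathcal{B}_0|+|\overline{\mathcal{A}_0}\cap\mathcal{B}_1|$ all hold. Increasingness is used exactly where it is needed: it gives $\mathcal{A}_0\subseteq\mathcal{A}_1$ and $\mathcal{B}_0\subseteq\mathcal{B}_1$, so $\mathcal{A}_0\,\Box\,\mathcal{B}_0$ lies in the overlap of the two cross terms, and it keeps the induction hypothesis applicable to the sections. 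The paper gives no proof of this lemma and simply quotes it from \cite{2018BLMS}, so there is no in-paper argument to compare against; your proof is a valid, self-contained justification of the cited statement.
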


It was shown in~\cite{2018BLMS} that any \(s\)-saturated family is increasing. For completeness, we include the short proof.
\begin{prop}[\cite{2018BLMS}]\label{claim:Increasing}
Let \(\mathcal{F}\subseteq 2^{[n]}\) be an \(s\)-saturated family, then \(\mathcal{F}\) is increasing.
\end{prop}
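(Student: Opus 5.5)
The plan is to argue by maximality. Let $F\in\mathcal{F}$ and $F'\supseteq F$. We want to show $F'\in\mathcal{F}$, and we suppose for contradiction that $F'\notin\mathcal{F}$. Since $\mathcal{F}$ is $s$-saturated, adding $F'$ must create a matching of size $s$. So there exist pairwise disjoint sets $F',A_2,\dots,A_s$ with each $A_i\in\mathcal{F}$. The set $F'$ must appear in this matching, because $\mathcal{F}$ itself contains no matching of size $s$.

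The key step is to shrink $F'$ back to $F$. Because $F\subseteq F'$, the set $F$ is also disjoint from each of $A_2,\dots,A_s$. Moreover $F\ne A_i$ for every $i$: if $F=A_i$, then $F\subseteq F'$ and $F\cap F'=\emptyset$ would force $F=\emptyset$, so $A_i=\emptyset$ would be disjoint from $F'$ and hence $F'$ would also be disjoint from $F=\emptyset$. This degenerate situation needs a separate look, which I give below. In the nondegenerate case, $F,A_2,\dots,A_s$ are $s$ pairwise disjoint members of $\mathcal{F}$, contradicting that $\mathcal{F}$ has no matching of size $s$.

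The only delicate point is the empty set. If $\emptyset\in\mathcal{F}$, then $\emptyset$ is disjoint from everything, including itself. Under the convention that a matching consists of $s$ pairwise disjoint sets, possibly with repeats, $\emptyset\in\mathcal{F}$ immediately gives a matching $\emptyset,\dots,\emptyset$, which is impossible. Under the convention that the $s$ sets are distinct, the step $F=A_i\Rightarrow F=\emptyset$ is where care is needed. In that case one takes $F=\emptyset$ together with $A_2,\dots,A_s$, where one $A_i$ already equals $\emptyset$, so the sets are not distinct. Here I would instead observe that if $\emptyset\in\mathcal{F}$ and $\mathcal{F}$ has no $s$ pairwise disjoint distinct sets, then $\mathcal{F}\setminus\{\emptyset\}$ has no $s-1$ such sets, and maximality handles the rest. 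More simply, I would adopt the paper's implicit convention that $\emptyset\notin\mathcal{F}$ whenever $s\ge 2$, since $\emptyset$ together with copies of itself forms a matching. I expect this bookkeeping about the empty set to be the only obstacle; the main argument is a one-line replacement of $F'$ by $F$ in the matching.
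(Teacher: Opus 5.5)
Your core argument is exactly the paper's proof: if some $F'\supseteq F$ were missing from $\mathcal{F}$, maximality gives $s-1$ pairwise disjoint members of $\mathcal{F}$ avoiding $F'$, and swapping $F'$ for $F$ yields a forbidden matching. The convention you finally adopt, that $\emptyset$ counts as disjoint from itself so $\emptyset\notin\mathcal{F}$, is the one the paper uses implicitly. One correction to your digression: under the strict distinct-sets convention the statement is actually false (for $s=2$ and $n\ge 1$, $\mathcal{F}=\{\emptyset\}$ is $2$-saturated but not increasing), so ``maximality handles the rest'' cannot be carried out there, and the convention is necessary rather than merely simpler.
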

\begin{proof}[Proof of Proposition~\ref{claim:Increasing}]
Let $F\in \mathcal{F}$. Suppose there is some $F'\notin \mathcal{F}$ such that \(F\subseteq F'\). Since adding \(F'\) to \(\mathcal{F}\) can induce a matching of size \(s\), then there exist $s-1$ pairwise disjoint sets $F_1,\ldots ,F_{s-1}\in\mathcal{F}$ such that $F'\cap\big(\bigsqcup_{i=1}^{s-1}F_i\big)=\emptyset$. Since \(F\subseteq F'\), we can see $F,F_1,\ldots,F_{s-1}$ form a matching of size \(s\), a contradiction.
\end{proof}
The next structural relation between \(\mathcal{F}\) and its complement, established in~\cite{2018BLMS}, will be of central importance for our argument.
\begin{prop}[\cite{2018BLMS}]\label{claim:RelationFG}
    Let \(\mathcal{F}\subseteq 2^{[n]}\) be an \(s\)-saturated family, then \(
    \overline{\mathcal{F}^{c}} = \mathcal{F}^{\Box (s-1)}.
    \)
\end{prop}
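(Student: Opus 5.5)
We prove \(\overline{\mathcal{F}^{c}} = \mathcal{F}^{\Box (s-1)}\) by double inclusion, using only the two defining properties of an \(s\)-saturated family: it has no matching of size \(s\), and it is maximal with respect to this. Unwinding the notation, \(\overline{\mathcal{F}^c}=\{[n]\setminus A : A\notin\mathcal{F}\}\). The set \(\mathcal{F}^{\Box(s-1)}\) consists of all sets of the form \(F_1\sqcup\cdots\sqcup F_{s-1}\) with \(F_i\in\mathcal{F}\) pairwise disjoint. Both families are increasing, but we will not need this.

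\emph{Inclusion \(\overline{\mathcal{F}^c}\subseteq\mathcal{F}^{\Box(s-1)}\).} Let \(A\notin\mathcal{F}\). By maximality, \(\mathcal{F}\cup\{A\}\) contains a matching of size \(s\). Since \(\mathcal{F}\) itself contains none, this matching uses \(A\). So there are pairwise disjoint \(F_1,\dots,F_{s-1}\in\mathcal{F}\), each disjoint from \(A\). Set \(D=F_1\sqcup\cdots\sqcup F_{s-1}\subseteq [n]\setminus A\). Replace \(F_{s-1}\) by \(F_{s-1}\cup\bigl(([n]\setminus A)\setminus D\bigr)\). This new set lies in \(\mathcal{F}\) because \(\mathcal{F}\) is increasing (Proposition~\ref{claim:Increasing}), and it remains disjoint from \(F_1,\dots,F_{s-2}\). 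The resulting \(s-1\) sets are pairwise disjoint members of \(\mathcal{F}\) whose union is exactly \([n]\setminus A\), so \([n]\setminus A\in\mathcal{F}^{\Box(s-1)}\).

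When \(s=2\), the padding is applied to the single set \(F_1\). This case needs no separate treatment, since \(\mathcal{F}^{\Box 1}=\mathcal{F}\).

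\emph{Inclusion \(\mathcal{F}^{\Box(s-1)}\subseteq\overline{\mathcal{F}^c}\).} Let \(B=F_1\sqcup\cdots\sqcup F_{s-1}\) with the \(F_i\in\mathcal{F}\) pairwise disjoint, and put \(A=[n]\setminus B\). We must show \(A\notin\mathcal{F}\). If \(A\in\mathcal{F}\), then \(A\) is disjoint from every \(F_i\), so \(A,F_1,\dots,F_{s-1}\) would be an \(s\)-matching in \(\mathcal{F}\), a contradiction. Hence \(A\in\mathcal{F}^c\), and therefore \(B=[n]\setminus A\in\overline{\mathcal{F}^c}\).

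The only non-mechanical step is the padding in the first inclusion. Maximality only produces disjoint sets whose union lies inside \([n]\setminus A\), and monotonicity is exactly what lets us enlarge one of them to fill up the complement. The rest is direct unwinding of the definitions.
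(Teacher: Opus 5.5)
Your double-inclusion argument is correct and is the standard proof from \cite{2018BLMS}; the paper quotes this proposition without reproving it. In your argument, $\mathcal{F}^{\Box(s-1)}\subseteq\overline{\mathcal{F}^{c}}$ is just the absence of an $s$-matching, and the reverse inclusion is maximality followed by enlarging one of the $s-1$ sets using Proposition~\ref{claim:Increasing}.

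The only tacit point is in the second inclusion, where $A,F_1,\dots,F_{s-1}$ must be $s$ \emph{distinct} sets. This holds because $\emptyset\notin\mathcal{F}$ whenever $s\le n+1$: otherwise $\mathcal{F}=2^{[n]}$, which contains the $s$-matching $\emptyset,\{1\},\dots,\{s-1\}$. For $s\ge n+2$, the only $s$-saturated family is $2^{[n]}$, and the identity genuinely fails there ($\emptyset\in\mathcal{F}^{\Box(s-1)}$ but $\overline{\mathcal{F}^{c}}=\emptyset$); this degenerate case does not matter for the paper's theorems.
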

We then briefly recall the notion of influence from the analysis of Boolean functions. Let \(f:\{0,1\}^n\to\{0,1\}\) be a Boolean function and let \(S\subseteq [n]\).  
We define \(I_f(S)\) as follows: choose the coordinates \((x_i)_{i\notin S}\) independently and uniformly at random from \(\{0,1\}\), and consider the resulting restricted function in the remaining variables \((x_i)_{i\in S}\). We say that \(f\) is \emph{undetermined} by this partial assignment if there exist two choices of \((x_i)_{i\in S}\) for which \(f\) takes different values, and we set \(I_f(S)\) to be the probability over the random choice of \((x_i)_{i\notin S}\) that \(f\) remains undetermined.

For a single coordinate \(i\in[n]\), we abbreviate \(I_f(\{i\})\) by \(I_f(x_i)\). Equivalently, \(I_f(x_i)\) is the probability that, after randomly fixing all \(x_j\) with \(j\neq i\), changing the value of \(x_i\) flips the value of \(f\).

We shall use the following form of the celebrated Kahn--Kalai--Linial theorem~\cite{1988kahanKalaiLinial}.

\begin{theorem}[\cite{1988kahanKalaiLinial}]\label{lemma:Bool}
   Let \(f:\{0,1\}^n\to\{0,1\}\) take the value \(1\) with probability \(p\) with respect to the uniform measure on \(\{0,1\}^n\), and assume \(p\le \tfrac{1}{2}\). Then
   \[
      \sum_{i=1}^n \bigl(I_f(x_i)\bigr)^2
      \;\ge\;
      \frac{p^2\log^2 n}{5n}.
   \]
   In particular, there exists some \(i\in[n]\) such that
   \(
      I_f(x_i)\;\ge\; \frac{p\log n}{\sqrt{5}n}.
   \)
\end{theorem}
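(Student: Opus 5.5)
This is the Kahn--Kalai--Linial theorem, and I would prove it by the standard Fourier-analytic route, using hypercontractivity. Write \(\chi_S(x)=(-1)^{\sum_{i\in S}x_i}\) and \(f=\sum_S \hat f(S)\chi_S\). Let \(D_if(x)=f(x^{i\to 0})-f(x^{i\to 1})\). Then \(D_if=2\sum_{S\ni i}\hat f(S)\chi_{S\setminus\{i\}}\) and \(D_if\in\{-1,0,1\}\). Consequently
\[
I_f(x_i)=\Pr[D_if\neq 0]=\|D_if\|_q^q \ \text{ for every } q>0,\qquad I_f(x_i)=4\sum_{S\ni i}\hat f(S)^2 .
\]
Parseval gives \(\sum_{S\neq\emptyset}\hat f(S)^2=\operatorname{Var}(f)=p(1-p)\ge p/2\), since \(p\le\tfrac12\). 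The plan is to show that this variance cannot be spread over Fourier levels \(1\le|S|\) if all influences are small in \(\ell^2\).

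\textbf{Step 1 (split the spectrum).} Fix a threshold \(k\). For the high part, count each \(S\) once for each of its elements:
\[
\sum_{|S|>k}\hat f(S)^2\le \frac1k\sum_S|S|\hat f(S)^2=\frac1{4k}\sum_i I_f(x_i).
\]
By Cauchy--Schwarz, with \(W:=\sum_i I_f(x_i)^2\), this is at most \(\frac{\sqrt{nW}}{4k}\).

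\textbf{Step 2 (low part via Bonami--Beckner).} Use the hypercontractive inequality \(\|T_\rho g\|_2\le\|g\|_{1+\rho^2}\) with \(\rho=\tfrac12\), applied to \(g=D_if\). This gives
\[
4\sum_{S\ni i}\rho^{2(|S|-1)}\hat f(S)^2\le \|D_if\|_{5/4}^2=I_f(x_i)^{8/5}.
\]
Summing over \(i\) and keeping only \(1\le |S|\le k\) yields
\[
\sum_{1\le|S|\le k}\hat f(S)^2\le \tfrac14\,4^{k-1}\sum_i I_f(x_i)^{8/5}.
\]
The power-mean inequality bounds the last sum by \(n^{1/5}W^{4/5}\).

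\textbf{Step 3 (optimize \(k\)).} Suppose, for contradiction, that \(W<\frac{p^2\log^2 n}{5n}\). Combining the two parts gives
\[
\frac p2\le \frac{\sqrt{nW}}{4k}+4^{k-2}n^{1/5}W^{4/5}.
\]
The first term is of order \(\frac{p\log n}{k}\), so it becomes small once \(k\) is a suitable constant multiple of \(\log n\). The second term is of order \(4^k n^{-3/5}p^{8/5}(\log n)^{8/5}\), which is negligible for \(k\le c\log n\) with \(c\) below a threshold. Choosing \(k\) in that window gives the contradiction. The "in particular" statement then follows immediately from \(\max_i I_f(x_i)^2\ge W/n\).

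The main obstacle is Step 3: pinning down the exact constant \(\tfrac15\) (with \(\log\) taken to base 2), including small \(n\) and small \(p\). I expect the bookkeeping to need adjusting, for instance tuning \(\rho\) rather than fixing \(\rho=\tfrac12\), weighting the split with \(\rho^{2|S|}\) rather than using a sharp cutoff, and using \(1-p\ge\tfrac12\) carefully. If the constant is not reached directly, I would fall back on the sharper form \(\sum_i I_i/\log(1/I_i)\gtrsim p\) (Talagrand's inequality) and deduce the \(\ell^2\) bound from it. For small \(n\) the claimed bound is weak enough to verify directly, using the edge-isoperimetric inequality \(\sum_i I_f(x_i)\ge 2p\log(1/p)\) together with Cauchy--Schwarz.
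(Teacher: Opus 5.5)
The paper does not prove this statement. It is imported from Kahn--Kalai--Linial, and the Fourier/Bonami--Beckner argument you outline is the standard (and original) route. Your Steps 1 and 2 are correct, and together they give, for every real $k>0$,
\[
p(1-p)\le \frac{\sqrt{nW}}{4k}+4^{k-2}\,n^{1/5}W^{4/5}.
\]
The gap is Step 3. That is exactly where the explicit constant $\tfrac15$ has to be earned, and you assert it rather than verify it. Write $a=\log n/\sqrt5$. Under the hypothesis $W<\frac{p^2\log^2 n}{5n}$ you get a contradiction only if some $k$ satisfies
\[
\frac{a}{4k}+p^{3/5}\,4^{k-2}\,n^{-3/5}a^{8/5}\le 1-p ,
\]
and $p=\tfrac12$ is the worst case.

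With $\log=\log_2$, as you say you intend, no such $k$ exists over an enormous range of $n$. The first term forces $k>a/2$. Then $4^kn^{-3/5}>n^{1/\sqrt5-3/5}\approx n^{-0.15}$, which cannot beat the factor $(\log n)^{8/5}$ until $n$ is astronomically large. Concretely:
\begin{itemize}
\item at $n=2^{10}$ the left side exceeds $0.65$ for every $k>0$, and no choice of $\rho$ rescues the sharp cutoff there;
\item at $n=2^{50}$ its minimum is still about $0.53$;
\item it first drops below $\tfrac12$ only around $n\approx 2^{66}$.
\end{itemize}
Your isoperimetric fallback gives $W\ge 4p^2\log_2^2(1/p)/n$. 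This implies the claim only when $\log_2 n\le 2\sqrt5\log_2(1/p)$, i.e.\ $n\le 22$ for $p$ near $\tfrac12$. So as written, the inequality is established only for very small and for astronomically large $n$. Appealing to Talagrand's inequality merely relocates the problem unless you have that inequality with an explicit, sufficiently good constant.

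There are two ways to close this.

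If $\log$ means $\ln$, your estimates already suffice. Take $k=0.6a=\frac{3\ln n}{5\sqrt5}$. Then $\frac{a}{4k}=\frac5{12}$, and $4^kn^{-3/5}=e^{-0.6(\sqrt5-\ln 4)a}\le e^{-a/2}$. So the second term is at most $\frac{2^{-3/5}}{16}\max_{a>0}a^{8/5}e^{-a/2}<0.06$, and the total is below $\tfrac12\le 1-p$ for every $n$.

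For $\log_2$ you need a sharper Step 2, because the sharp cutoff throws away both the factor $|S|$ and the contribution $|S|/k$ on the low levels. Instead, require $1\le |S|\bigl(\alpha+\beta\rho^{2(|S|-1)}\bigr)$ for every $|S|\ge 1$. This gives
\[
p(1-p)\le \frac{\alpha}{4}\sum_{i} I_f(x_i)+\frac{\beta}{4}\sum_{i} I_f(x_i)^{2/(1+\rho^2)} .
\]
With $\alpha=1/k$ one may take $\beta=\max_{1\le m<k}\frac{k-m}{km\rho^{2(m-1)}}$. This is typically $\frac{1}{k(k-1)\rho^{2(k-2)}}$ instead of your $\rho^{-2(k-1)}$, a gain of order $k^2/\rho^2$. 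For instance, at $n=2^{10}$, $p=\tfrac12$, the choice $k=3$, $\rho^2=\tfrac14$ brings the corresponding left side down to about $0.39$. That is the kind of improvement the middle range requires. However, the resulting numerical inequality still has to be checked uniformly in $n$ before the statement with the constant $\tfrac15$ is actually proved.
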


We now translate \cref{lemma:Bool} into a convenient formulation for set systems.  
Given \(\mathcal{A}\subseteq 2^{[n]}\), define the associated Boolean function
\(f_{\mathcal{A}}:\{0,1\}^n\to\{0,1\}\) by letting \(\bm{x}\in\{0,1\}^n\) be the characteristic vector of a set \(S\subseteq[n]\) and setting
\[
   f_{\mathcal{A}}(\bm{x})
   \;=\;
   \begin{cases}
      1, & \text{if } S\in\mathcal{A},\\[2pt]
      0, & \text{otherwise}.
   \end{cases}
\]
Then \(f_{\mathcal{A}}=1\) with probability
\(
   p = |\mathcal{A}|/2^n.
\)

Fix \(i\in[n]\). After randomly fixing all coordinates \(x_j\) with \(j\neq i\), the remaining choice of \(x_i\in\{0,1\}\) corresponds precisely to the pair \((A, A\cup\{i\})\), where
\[
   A = \{j\in[n]\setminus\{i\} : x_j = 1\}.
\]
The function \(f_{\mathcal{A}}\) is undetermined after this partial assignment if and only if exactly one of \(A\) and \(A\cup\{i\}\) belongs to \(\mathcal{A}\). Each choice of \((x_j)_{j\neq i}\) is equally likely and corresponds to a unique subset \(A\subseteq [n]\setminus\{i\}\), so
\[
   I_{f_{\mathcal{A}}}(x_i)
   \;=\;
   \frac{1}{2^{n-1}}
   \bigl|\{A\subseteq[n]\setminus\{i\} :
        |\{A, A\cup\{i\}\}\cap\mathcal{A}|=1\}\bigr|.
\]

When \(|\mathcal{A}|\le 2^{n-1}\), applying \cref{lemma:Bool} to \(f_{\mathcal{A}}\) with \(p = |\mathcal{A}|/2^n\le\frac{1}{2}\), we obtain an index \(i\in[n]\) such that
\[
   I_{f_{\mathcal{A}}}(x_i)
   \;\ge\;
   \frac{p\log n}{\sqrt{5}n}
   =
   \frac{\log n}{\sqrt{5}n}\cdot\frac{|\mathcal{A}|}{2^n}.
\]
Combining this with the expression for \(I_{f_{\mathcal{A}}}(x_i)\) above and multiplying both sides by \(2^{n-1}\), we arrive at the following consequence.

\begin{lemma}\label{lem:KKL}
    For any
    \(\mathcal{A}\subseteq 2^{[n]}\) with \(|\mathcal{A}|\le 2^{n-1}\),
    there exists some \(i\in [n]\) such that
    \[
        \bigl|\{A\subseteq [n]\setminus\{i\}:
               |\{A, A\cup\{i\}\}\cap\mathcal{A}|=1\}\bigr|
        \;\ge\; \frac{\log n}{2\sqrt{5}n}\,|\mathcal{A}|.
    \]
\end{lemma}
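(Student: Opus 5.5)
The plan is to apply \cref{lemma:Bool} to the indicator $f_{\mathcal{A}}$ and then translate influences into counts of pairs, exactly along the lines of the discussion preceding the statement.

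\emph{Step 1 (applying KKL).} Fix $\mathcal{A}\subseteq 2^{[n]}$ with $|\mathcal{A}|\le 2^{n-1}$ and set $p=|\mathcal{A}|/2^n$. Then $f_{\mathcal{A}}$ takes the value $1$ with probability $p\le \tfrac12$, so \cref{lemma:Bool} applies. Its ``in particular'' clause gives an index $i\in[n]$ with
\[
I_{f_{\mathcal{A}}}(x_i)\ge \frac{p\log n}{\sqrt5\, n}.
\]
This clause follows from the sum-of-squares bound by averaging. Indeed, if every $I_{f_{\mathcal{A}}}(x_j)$ were smaller than $\frac{p\log n}{\sqrt5 n}$, then
\[
\sum_j I_{f_{\mathcal{A}}}(x_j)^2 < n\cdot \frac{p^2\log^2 n}{5n^2}=\frac{p^2\log^2n}{5n},
\]
which is at most the bound $\frac{p^2\log^2 n}{5n}$ asserted by the theorem, a contradiction.

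\emph{Step 2 (identifying the influence with a count).} A uniformly random assignment $(x_j)_{j\ne i}$ corresponds bijectively to a uniformly random $A\subseteq[n]\setminus\{i\}$. Setting $x_i=0$ or $x_i=1$ yields the sets $A$ and $A\cup\{i\}$ respectively. The restricted function is undetermined precisely when exactly one of these two sets lies in $\mathcal{A}$. Hence
\[
I_{f_{\mathcal{A}}}(x_i)=\frac{1}{2^{n-1}}\bigl|\{A\subseteq[n]\setminus\{i\}:|\{A,A\cup\{i\}\}\cap\mathcal{A}|=1\}\bigr|.
\]

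\emph{Step 3 (conclusion).} Multiplying the inequality from Step 1 by $2^{n-1}$ gives
\[
2^{n-1}\cdot\frac{\log n}{\sqrt5\,n}\cdot\frac{|\mathcal{A}|}{2^n}=\frac{\log n}{2\sqrt5\, n}\,|\mathcal{A}|,
\]
which is the claimed lower bound.

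There is no real obstacle here. The only points needing care are the normalization, namely that the denominator $2^{n-1}$ counts assignments to the $n-1$ coordinates other than $i$, which produces the factor $\tfrac12$ in the constant, and the check that $p\le\tfrac12$ so that \cref{lemma:Bool} is applicable.
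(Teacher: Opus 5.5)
Your proposal is correct and is the paper's own derivation: apply \cref{lemma:Bool} to $f_{\mathcal{A}}$ with $p=|\mathcal{A}|/2^n\le\frac{1}{2}$, identify $I_{f_{\mathcal{A}}}(x_i)$ with the pair count divided by $2^{n-1}$, and multiply by $2^{n-1}$. Your averaging derivation of the ``in particular'' clause is a harmless extra, since the paper takes it as part of the cited theorem; it also covers the edge cases $p=0$ and $n=1$, where the bound is trivially zero.
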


\section{Proof of~\cref{thm:ViaKKL}}
Let $\mathcal{F}\subseteq 2^{[n]}$ be an $s$-saturated family. Notice that by Proposition~\ref{claim:RelationFG}, we have
\[
  \overline{\mathcal{F}^{c}} = \mathcal{F}^{\Box (s-1)}.
\]
In this view, our task reduces to obtaining
a suitable upper bound on $\bigl|\mathcal{F}^{\Box (s-1)}\bigr|$.
Fix an arbitrary element $x\in [n]$, and consider the two families of subsets of $[n]\setminus\{x\}$ defined by
\[
    \mathcal{F}(x)
    \coloneqq
    \{\,F\setminus\{x\}: F\in\mathcal{F},\, x\in F\,\},
    \qquad
    \mathcal{F}(\overline{x})
    \coloneqq
    \{\,F\in\mathcal{F}: x\notin F\,\}.
\]
The following containment is the key observation in our proof.

\begin{claim}\label{prop:inclusion}
 For any integer $t$ with $0\le t\le s-1$, the set systems in \(2^{[n]\setminus\{x\}}\) satisfy
   \[
      \overline{\bigl(\mathcal{F}(x)\;\Box\; \mathcal{F}(\overline{x})^{\Box t}\bigr)^{c}}
      \supseteq
      \mathcal{F}(\overline{x})^{\Box (s-1-t)}.
   \]
\end{claim}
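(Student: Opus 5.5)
We need to show the following: for any set \(G\subseteq [n]\setminus\{x\}\) with \(G\in \mathcal{F}(\overline{x})^{\Box(s-1-t)}\), the complement \(G':=([n]\setminus\{x\})\setminus G\) does not lie in \(\mathcal{F}(x)\;\Box\;\mathcal{F}(\overline{x})^{\Box t}\). Here complements and \({}^c\) are taken inside \(2^{[n]\setminus\{x\}}\). Unwinding the notation, \(\overline{(\mathcal{B})^{c}}\) consists of the sets \(V\setminus B\) with \(B\notin\mathcal{B}\). So \(G\) lies in the left-hand side exactly when its complement \(G'\) in \(V=[n]\setminus\{x\}\) is not in \(\mathcal{B}=\mathcal{F}(x)\;\Box\;\mathcal{F}(\overline{x})^{\Box t}\). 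The plan is therefore a direct argument by contradiction: we exhibit \(s\) pairwise disjoint members of \(\mathcal{F}\), which is impossible since \(\mathcal{F}\) is \(s\)-saturated.

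First I would fix \(G\in \mathcal{F}(\overline{x})^{\Box(s-1-t)}\). By the definition of disjoint occurrence (and the convention \(\mathcal{A}^{\Box 0}=2^{V}\) when \(t=s-1\)), \(G\) contains pairwise disjoint sets \(H_1,\dots,H_{s-1-t}\in\mathcal{F}\), none containing \(x\). Strictly, \(G\) is a disjoint union of such sets, and in any case it contains them.

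Suppose for contradiction that \(G'\in\mathcal{F}(x)\;\Box\;\mathcal{F}(\overline{x})^{\Box t}\). Then \(G'\) contains pairwise disjoint sets \(A\setminus\{x\}\) and \(K_1,\dots,K_t\), where \(A\in\mathcal{F}\) with \(x\in A\), and each \(K_j\in\mathcal{F}\) with \(x\notin K_j\).

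Now consider the family \(A, K_1,\dots,K_t, H_1,\dots,H_{s-1-t}\), which has \(s\) members of \(\mathcal{F}\) in total. I would check disjointness as follows. The sets \(A\setminus\{x\}\) and the \(K_j\) lie in \(G'\), while the \(H_i\) lie in \(G\), and \(G\cap G'=\emptyset\). The only element outside \(V\) is \(x\), and it belongs only to \(A\), since neither the \(K_j\) nor the \(H_i\) contain \(x\). Pairwise disjointness within each group comes from the definition of \(\Box\). So these \(s\) sets form a matching of size \(s\) in \(\mathcal{F}\), contradicting \(s\)-saturation.

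Increasingness (Proposition~\ref{claim:Increasing}) is not even needed here. The main subtlety is bookkeeping:

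1. Interpreting the complement and the bar relative to the ground set \([n]\setminus\{x\}\).
2. Handling the degenerate cases \(t=0\) and \(t=s-1\), where one of the two \(\Box\)-powers is \(2^{V}\) and contributes no sets.

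These cases are handled by the same argument with empty lists. I expect no real obstacle beyond stating these conventions carefully.
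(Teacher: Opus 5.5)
Your proposal is correct and follows essentially the same argument as the paper: assume the complement (within $[n]\setminus\{x\}$) of a set in $\mathcal{F}(\overline{x})^{\Box(s-1-t)}$ lies in $\mathcal{F}(x)\;\Box\;\mathcal{F}(\overline{x})^{\Box t}$, and combine the two decompositions into $s$ pairwise disjoint members of $\mathcal{F}$. The paper first rewrites $C\cup\{x\}$ as an element of $\mathcal{F}^{\Box(t+1)}$ and then extracts the matching, whereas you list the $s$ sets directly. Your ``contains'' phrasing also covers the degenerate cases $t=0$ and $t=s-1$ slightly more cleanly than the paper's.
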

\begin{poc}
Let $U\coloneqq [n]\setminus\{x\}$.
Take any
\[
   A_1\sqcup \dots \sqcup A_{s-1-t}\in \mathcal{F}(\overline{x})^{\Box (s-1-t)},
\]
where $A_1,\dots,A_{s-1-t}$ are pairwise disjoint sets in $\mathcal{F}(\overline{x})$.
Set
\[
   B \coloneqq A_1\sqcup \dots \sqcup A_{s-1-t},
   \qquad
   C \coloneqq U\setminus B.
\]
To prove the claim, it suffices to show that $C\notin \mathcal{F}(x)\;\Box\; \mathcal{F}(\overline{x})^{\Box t}$.

Observe that $C\in \mathcal{F}(x)\;\Box\; \mathcal{F}(\overline{x})^{\Box t}$ would imply
\[
   [n]\setminus B \in \mathcal{F}^{\Box (t+1)}.
\]
Indeed, if
\[
   C = C_0 \sqcup C_1 \sqcup \dots \sqcup C_t
\]
with $C_0\in\mathcal{F}(x)$ and $C_i\in\mathcal{F}(\overline{x})$ for $1\le i\le t$, then by definition
there exist $F_0\in\mathcal{F}$ with $x\in F_0$ and $F_0\setminus\{x\}=C_0$, and
$F_i = C_i\in\mathcal{F}$ with $x\notin F_i$ for $1\le i\le t$.
These $t+1$ sets $F_0,\dots,F_t$ are pairwise disjoint and moreover \([n]\setminus B\) can be expressed as
\[
  [n]\setminus B = C\cup\{x\} = (C_0\cup\{x\}) \sqcup C_1 \sqcup\dots\sqcup C_t
   = F_0\sqcup \dots \sqcup F_t,
\]
which yields that $[n]\setminus B\in\mathcal{F}^{\Box (t+1)}$.

Therefore, to show $C\notin \mathcal{F}(x)\;\Box\; \mathcal{F}(\overline{x})^{\Box t}$, it is enough to show that
\[
   [n]\setminus B \notin \mathcal{F}^{\Box (t+1)}.
\]
Suppose for a contradiction that
\(
   [n]\setminus B \in \mathcal{F}^{\Box (t+1)}.
\)
Then there exist $t+1$ pairwise disjoint sets, namely
\(
   A_{s-t},\dots,A_s \in \mathcal{F}
\)
such that
\[
   [n]\setminus B
   = A_{s-t}\sqcup\dots\sqcup A_s.
\]
Since $B = A_1\sqcup \dots \sqcup A_{s-1-t}$ and $[n]\setminus B$ are complementary, the $s$ sets
\[
   A_1,\dots,A_{s-1-t},A_{s-t},\dots,A_s
\]
are pairwise disjoint members of $\mathcal{F}$, forming a matching of size $s$ in $\mathcal{F}$, a contradiction. This finishes the proof.
\end{poc}
Notice that every member of $\mathcal{F}(\overline{x})^{\Box (s-1)}$ is a union of
$s-1$ pairwise disjoint sets from $\mathcal{F}$ that all avoid $x$, and conversely
every element of $\mathcal{F}^{\Box (s-1)}$ avoiding $x$ arises in this way. Hence
\begin{equation}\label{eq:exchange1}
    \mathcal{F}(\overline{x})^{\Box (s-1)}
   = \mathcal{F}^{\Box (s-1)}(\overline{x}).
\end{equation}
Then the following inequality reduces the problem of bounding $|\mathcal{F}^{\Box (s-1)}|$
to bounding a suitable section of $\mathcal{F}$. In particular, it provides an
upper bound on $|\mathcal{F}^{\Box (s-1)}|$ in terms of
$|\mathcal{F}(\overline{x})^{\Box (s-1)}|$.

\begin{claim}\label{prop:bound diversity}
 For every $x\in[n]$, we have
\[
   (s-1)\cdot\bigl|\mathcal{F}^{\Box (s-1)}\bigr|
   \le
   (s-2)\cdot\bigl|\mathcal{F}(\overline{x})^{\Box (s-1)}\bigr|
   + 2^{n-1}.
\]
\end{claim}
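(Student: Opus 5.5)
The plan is to split $\mathcal{F}^{\Box (s-1)}$ according to whether its members contain $x$, and then bound the part containing $x$ by a telescoping sum. Write $U=[n]\setminus\{x\}$, so $|2^U|=2^{n-1}$. For $t\ge 0$ set
\[
a_t\coloneqq\bigl|\mathcal{F}(x)\;\Box\;\mathcal{F}(\overline{x})^{\Box t}\bigr|,\qquad b_t\coloneqq\bigl|\mathcal{F}(\overline{x})^{\Box t}\bigr|,
\]
all computed in $2^U$, so that $b_0=2^{n-1}$. By Proposition~\ref{claim:Increasing}, $\mathcal{F}$ is increasing, and hence so are $\mathcal{F}(x)$, $\mathcal{F}(\overline{x})$ and all their disjoint occurrences inside $2^U$. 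In particular the chain $b_0\ge b_1\ge\cdots$ is monotone, since $\mathcal{A}\Box\mathcal{B}\subseteq\mathcal{A}$ whenever $\mathcal{A}$ is increasing.

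First I would decompose $|\mathcal{F}^{\Box(s-1)}|$. Members avoiding $x$ number $b_{s-1}$ by \eqref{eq:exchange1}. A member containing $x$ is a disjoint union of $s-1$ sets of $\mathcal{F}$, exactly one of which contains $x$. Removing $x$ identifies these members bijectively with $\mathcal{F}(x)\Box\mathcal{F}(\overline{x})^{\Box(s-2)}$, so there are $a_{s-2}$ of them. Hence $|\mathcal{F}^{\Box(s-1)}|=b_{s-1}+a_{s-2}$, and the claim is equivalent to
\[
(s-1)\,a_{s-2}+b_{s-1}\le 2^{n-1}.
\]

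The key step is to show $a_{s-2}\le b_{s-2-t}-b_{s-1-t}$ for each $0\le t\le s-2$, and then sum over $t$. Fix $t$ and put $\mathcal{A}=\mathcal{F}(x)\Box\mathcal{F}(\overline{x})^{\Box t}$ and $\mathcal{B}=\mathcal{F}(\overline{x})^{\Box(s-2-t)}$, both increasing. Then $\mathcal{A}\Box\mathcal{B}=\mathcal{F}(x)\Box\mathcal{F}(\overline{x})^{\Box(s-2)}$; when $t=s-2$ we have $\mathcal{B}=2^U$ and $\mathcal{A}\Box 2^U=\mathcal{A}$ because $\mathcal{A}$ is increasing. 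Lemma~\ref{lem:ineq of disjoint occ}, applied in the ground set $U$, gives $a_{s-2}\le|\overline{\mathcal{A}}\cap\mathcal{B}|$.

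Claim~\ref{prop:inclusion} says $\overline{\mathcal{A}^c}\supseteq\mathcal{F}(\overline{x})^{\Box(s-1-t)}$, i.e.\ $\overline{\mathcal{A}}$ is disjoint from $\mathcal{F}(\overline{x})^{\Box(s-1-t)}$. Since $\mathcal{F}(\overline{x})^{\Box(s-1-t)}\subseteq\mathcal{B}$, it follows that
\[
|\overline{\mathcal{A}}\cap\mathcal{B}|\le b_{s-2-t}-b_{s-1-t}.
\]
Summing over $t=0,\dots,s-2$ telescopes to $(s-1)a_{s-2}\le b_0-b_{s-1}=2^{n-1}-b_{s-1}$, which is exactly what is needed.

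The main obstacle is finding the right way to combine Claim~\ref{prop:inclusion} with Lemma~\ref{lem:ineq of disjoint occ}. Applying the claim alone, as $a_t+b_{s-1-t}\le 2^{n-1}$, and summing loses too much. The choice of splitting $\mathcal{F}(\overline{x})^{\Box(s-2)}$ into $t$ and $s-2-t$ copies is what makes the bounds telescope. I would also double-check the edge cases $t=s-2$ and $s=2$, and confirm that Lemma~\ref{lem:ineq of disjoint occ} holds verbatim on the ground set $U$ with complements taken in $U$.
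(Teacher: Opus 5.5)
Your proposal is correct and follows the paper's proof essentially step for step. Both split $\mathcal{F}^{\Box(s-1)}$ according to $x$. Both then apply Lemma~\ref{lem:ineq of disjoint occ} with $\mathcal{A}=\mathcal{F}(x)\Box\mathcal{F}(\overline{x})^{\Box t}$ and $\mathcal{B}=\mathcal{F}(\overline{x})^{\Box(s-2-t)}$, use Claim~\ref{prop:inclusion} to subtract $|\mathcal{F}(\overline{x})^{\Box(s-1-t)}|$, and telescope over $t=0,\dots,s-2$. Your added checks, namely that $\mathcal{A}\Box 2^U=\mathcal{A}$ when $t=s-2$ and that all the families involved are increasing in $2^U$, are details the paper leaves implicit.
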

\begin{poc}
Fix $x\in[n]$ and $t\in\{0,\dots,s-2\}$. Using the decomposition according to whether a set contains $x$, we first write
\begin{align*}
   \bigl|\mathcal{F}^{\Box(s-1)}\bigr|
   &= \bigl|\mathcal{F}^{\Box(s-1)}(\overline{x})\bigr|
      + \bigl|\mathcal{F}^{\Box(s-1)}(x)\bigr|\\
   &= \bigl|\mathcal{F}(\overline{x})^{\Box(s-1)}\bigr|
      + \bigl|\mathcal{F}(x)\;\Box\;\mathcal{F}(\overline{x})^{\Box(s-2)}\bigr|,
\end{align*}
where we take advantage of~\eqref{eq:exchange1}.

Next we apply Lemma~\ref{lem:ineq of disjoint occ} with
\[
   \mathcal{A} = \mathcal{F}(x)\;\Box\;\mathcal{F}(\overline{x})^{\Box t}
   \quad\text{and}\quad
   \mathcal{B} = \mathcal{F}(\overline{x})^{\Box(s-2-t)}.
\]
Notice that
\[
   \mathcal{A}\;\Box\;\mathcal{B}
   = \bigl(\mathcal{F}(x)\;\Box\;\mathcal{F}(\overline{x})^{\Box t}\bigr)
     \;\Box\;\mathcal{F}(\overline{x})^{\Box(s-2-t)}
   = \mathcal{F}(x)\;\Box\;\mathcal{F}(\overline{x})^{\Box(s-2)},
\]
so by Lemma~\ref{lem:ineq of disjoint occ},
\begin{align*}
   \bigl|\mathcal{F}^{\Box(s-1)}\bigr|
   &\le \bigl|\mathcal{F}(\overline{x})^{\Box(s-1)}\bigr|
      + \bigl|\mathcal{F}(x)\Box\mathcal{F}(\overline{x})^{\Box(s-2)}\bigr|\\
   &\le \bigl|\mathcal{F}(\overline{x})^{\Box(s-1)}\bigr|
      + \bigl|\overline{\mathcal{A}}\cap\mathcal{B}\bigr|.
\end{align*}
For any families $\mathcal{A},\mathcal{B}$ we have
\(
   \bigl|\overline{\mathcal{A}}\cap\mathcal{B}\bigr|
   = |\mathcal{B}| - \bigl|\overline{\mathcal{A}^{c}}\cap\mathcal{B}\bigr|.
\)
 Therefore
\begin{align*}
   \bigl|\mathcal{F}^{\Box(s-1)}\bigr|
   &\le \bigl|\mathcal{F}(\overline{x})^{\Box(s-1)}\bigr|
      + \bigl|\mathcal{F}(\overline{x})^{\Box(s-2-t)}\bigr|
      - \bigl|\overline{\mathcal{A}^c}
               \cap\mathcal{F}(\overline{x})^{\Box(s-2-t)}\bigr|.
\end{align*}
By Claim~\ref{prop:inclusion},
\[
   \overline{\bigl(\mathcal{F}(x)\;\Box\;\mathcal{F}(\overline{x})^{\Box t}\bigr)^{c}}
   \supseteq \mathcal{F}(\overline{x})^{\Box(s-1-t)},
\]
and clearly
\[
   \mathcal{F}(\overline{x})^{\Box(s-2-t)}
   \supseteq \mathcal{F}(\overline{x})^{\Box(s-1-t)}.
\]
Hence
\[
   \bigl|\overline{\mathcal{A}^c}
          \cap\mathcal{F}(\overline{x})^{\Box(s-2-t)}\bigr|
   \ge
   \bigl|\mathcal{F}(\overline{x})^{\Box(s-1-t)}\bigr|,
\]
and we obtain
\begin{equation}\label{eq:bound F(overline x)}
   \bigl|\mathcal{F}^{\Box(s-1)}\bigr|
   \le \bigl|\mathcal{F}(\overline{x})^{\Box(s-1)}\bigr|
      + \bigl|\mathcal{F}(\overline{x})^{\Box(s-2-t)}\bigr|
      - \bigl|\mathcal{F}(\overline{x})^{\Box(s-1-t)}\bigr|.
\end{equation}

Now let \(t\) run from \(0\) to \(s-2\) and sum the inequalities in \eqref{eq:bound F(overline x)}. The left-hand side gives
\[
   (s-1)\,\bigl|\mathcal{F}^{\Box(s-1)}\bigr|.
\]
On the right-hand side, the first term contributes
\[
   (s-1)\,\bigl|\mathcal{F}(\overline{x})^{\Box(s-1)}\bigr|,
\]
while
\[
   \sum_{t=0}^{s-2} \bigl|\mathcal{F}(\overline{x})^{\Box(s-2-t)}\bigr|
   = \sum_{j=0}^{s-2} \bigl|\mathcal{F}(\overline{x})^{\Box j}\bigr|,
\]
and
\[
   \sum_{t=0}^{s-2} \bigl|\mathcal{F}(\overline{x})^{\Box(s-1-t)}\bigr|
   = \sum_{k=1}^{s-1} \bigl|\mathcal{F}(\overline{x})^{\Box k}\bigr|.
\]
Their difference is
\[
   \sum_{j=0}^{s-2} \bigl|\mathcal{F}(\overline{x})^{\Box j}\bigr|
   - \sum_{k=1}^{s-1} \bigl|\mathcal{F}(\overline{x})^{\Box k}\bigr|
   = \bigl|\mathcal{F}(\overline{x})^{\Box 0}\bigr|
     - \bigl|\mathcal{F}(\overline{x})^{\Box(s-1)}\bigr|.
\]
Since \(\mathcal{F}(\overline{x})^{\Box 0} = 2^{[n]\setminus\{x\}}\), we have
\(\bigl|\mathcal{F}(\overline{x})^{\Box 0}\bigr| = 2^{n-1}\). Putting everything together,
\[
   (s-1)\,\bigl|\mathcal{F}^{\Box(s-1)}\bigr|
   \le (s-1)\,\bigl|\mathcal{F}(\overline{x})^{\Box(s-1)}\bigr|
      + 2^{n-1}
      - \bigl|\mathcal{F}(\overline{x})^{\Box(s-1)}\bigr|,
\]
which simplifies to
\[
   (s-1)\,\bigl|\mathcal{F}^{\Box(s-1)}\bigr|
   \le (s-2)\,\bigl|\mathcal{F}(\overline{x})^{\Box(s-1)}\bigr|
      + 2^{n-1}.
\]
This finishes the proof.

\end{poc}

We now need an upper bound on
\(
   \min_{x\in[n]} \bigl|\mathcal{F}^{\Box(s-1)}(\overline{x})\bigr|.
\)
Note that if $\mathcal{A}\subseteq 2^{[n]}$ is increasing, then
$\mathcal{A}(i)\supseteq\mathcal{A}(\overline{i})$, and each pair
$\{A,A\cup\{i\}\}$ contributes to the left-hand side of Lemma~\ref{lem:KKL} if and only if exactly one of $A,A\cup\{i\}$ lies in
$\mathcal{A}$. Hence
\[
   \bigl|\{A\subseteq [n]\setminus\{i\}:
          |\{A, A\cup\{i\}\}\cap\mathcal{A}|=1\}\bigr|
   = |\mathcal{A}(i)|-|\mathcal{A}(\overline{i})|
   = |\mathcal{A}|-2|\mathcal{A}(\overline{i})|.
\]
Applying Lemma~\ref{lem:KKL} to an increasing family
$\mathcal{A}\subseteq 2^{[n]}$ with $|\mathcal{A}|\le 2^{n-1}$, we
obtain an index $i\in[n]$ such that
\begin{equation}\label{eq:KKL-section-bound}
   2|\mathcal{A}(\overline{i})|
   \;\le\;
   \Bigl(1-\frac{\log n}{2\sqrt{5}n}\Bigr)\,|\mathcal{A}|.
\end{equation}

Now we apply this to
\(
   \mathcal{A}: = \mathcal{F}^{\Box(s-1)}.
\)
Since $\mathcal{F}$ is increasing, so is
$\mathcal{F}^{\Box(s-1)}$; moreover, as observed above,
$\mathcal{F}^{\Box(s-1)}$ is intersecting, and therefore
\(
   \bigl|\mathcal{F}^{\Box(s-1)}\bigr|\le 2^{n-1}.
\)
Thus, by \eqref{eq:KKL-section-bound}, there exists
$x\in[n]$ such that
\begin{equation}\label{eq:good-x-section}
   \bigl|\mathcal{F}^{\Box(s-1)}(\overline{x})\bigr|
   \;\le\;
   \frac{1}{2}\Bigl(1-\frac{\log n}{2\sqrt{5}n}\Bigr)
   \bigl|\mathcal{F}^{\Box(s-1)}\bigr|.
\end{equation}

Combining Claim~\ref{prop:bound diversity} with \eqref{eq:good-x-section},
we obtain
\[
   (s-1)\,\bigl|\mathcal{F}^{\Box(s-1)}\bigr|
   \;\le\;
   (s-2)\,\bigl|\mathcal{F}^{\Box(s-1)}(\overline{x})\bigr|
   + 2^{n-1}
   \;\le\;
   \frac{s-2}{2}\Bigl(1-\frac{\log n}{2\sqrt{5}n}\Bigr)
   \bigl|\mathcal{F}^{\Box(s-1)}\bigr|
   + 2^{n-1}.
\]
Rearranging, we get
\begin{equation}\label{eq:Fbox-upper}
   \bigl|\mathcal{F}^{\Box(s-1)}\bigr|
   \;\le\;
   \frac{2^n}{\,s + (s-2)\frac{\log n}{2\sqrt{5}n}\,}.
\end{equation}

Recall that
\(
   \overline{\mathcal{F}^{c}} = \mathcal{F}^{\Box(s-1)}
\),
so $|\mathcal{F}| = 2^n - |\mathcal{F}^{\Box(s-1)}|$.  Using
\eqref{eq:Fbox-upper}, we obtain
\[
   |\mathcal{F}|
   \;\ge\;
   2^n\left(
      1 - \frac{1}{\,s + (s-2)\frac{\log n}{2\sqrt{5}n}\,}
   \right),
\]
which is the desired bound. This finishes the proof.

\section{Proof of~\cref{thm:main}}
Assume \(2\le s\le n+1\), since the case \(s>n+1\) is trivial.
Let \(\mathcal{F}\subseteq 2^{[n]}\) be an \(s\)-saturated family, and define
\(
\mathcal{G}:=\overline{\mathcal{F}^c}.
\)
Since \(|\mathcal{F}|=2^n-|\mathcal{G}|\), it suffices to prove that
\begin{equation}\label{eq:G-upper-main}
|\mathcal{G}|
\le
\frac{2^{n}-(2^{s-1}-s)\,2^{g(n,s)}}{s}.
\end{equation}
Notice that Proposition\ref{claim:Increasing} and Proposition~\ref{claim:RelationFG} together give the following corollary.
\begin{cor}\label{Cor:IncrGG}
   \(\mathcal{G}\) is increasing.
\end{cor}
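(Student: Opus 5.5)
The plan is to combine Proposition~\ref{claim:RelationFG} and Proposition~\ref{claim:Increasing} directly. First, by Proposition~\ref{claim:RelationFG} we have \(\mathcal{G}=\overline{\mathcal{F}^c}=\mathcal{F}^{\Box(s-1)}\). So it suffices to show that the disjoint occurrence of increasing families is increasing. By Proposition~\ref{claim:Increasing}, \(\mathcal{F}\) itself is increasing. For \(s=2\), \(\mathcal{F}^{\Box 1}=\mathcal{F}\), and there is nothing more to prove. (The degenerate convention \(\mathcal{F}^{\Box 0}=2^{[n]}\) is never needed, since \(s\ge 2\).)

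For the general step, take \(G=F_1\sqcup\cdots\sqcup F_{s-1}\in\mathcal{F}^{\Box(s-1)}\) with pairwise disjoint \(F_i\in\mathcal{F}\), and let \(G'\supseteq G\). Set \(F_1':=F_1\cup(G'\setminus G)\). This set contains \(F_1\), so it lies in \(\mathcal{F}\) because \(\mathcal{F}\) is increasing. It is also disjoint from \(F_2,\dots,F_{s-1}\), since \(G'\setminus G\) avoids all of \(G\). Hence
\[
G'=F_1'\sqcup F_2\sqcup\cdots\sqcup F_{s-1}\in\mathcal{F}^{\Box(s-1)},
\]
and therefore \(\mathcal{G}\) is increasing.

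An alternative route, which avoids the disjoint-occurrence description, works directly from the definition. If \(G\in\mathcal{G}\) and \(G'\supseteq G\), then \([n]\setminus G\notin\mathcal{F}\). Since \([n]\setminus G'\subseteq[n]\setminus G\) and \(\mathcal{F}\) is increasing, we also get \([n]\setminus G'\notin\mathcal{F}\), so \(G'\in\mathcal{G}\). This already uses only Proposition~\ref{claim:Increasing}, and I will present it as the main argument because it is shortest.

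I do not expect any real obstacle. The only care needed is bookkeeping in the first route: the enlargement \(G'\setminus G\) must be absorbed into a single member \(F_1\) so that disjointness from the other \(F_i\) is preserved.
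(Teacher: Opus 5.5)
Your proof is correct, and both routes work, but the one you chose as your main argument differs from the paper's. The paper derives the corollary from Proposition~\ref{claim:Increasing} together with Proposition~\ref{claim:RelationFG}. It uses $\mathcal{G}=\mathcal{F}^{\Box(s-1)}$ and the fact that a disjoint union of $s-1$ members of an increasing family stays such a union after enlargement. The paper leaves that step implicit, and your first route, with the absorption $F_1'=F_1\cup(G'\setminus G)$, supplies it.

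Your main argument never uses Proposition~\ref{claim:RelationFG}. It needs only that $\mathcal{F}$ is increasing, so $\mathcal{F}^c$ is a down-set, and complementing every member of a down-set gives an up-set. This is shorter and more general: it holds for any increasing $\mathcal{F}$, and saturation enters only through Proposition~\ref{claim:Increasing}.

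What the paper's route buys is that it treats $\mathcal{G}$ in the form $\mathcal{F}^{\Box(s-1)}$, which is how it is used afterwards. The absorption step of your first route is exactly how the paper later builds, for each $G\supseteq G_0=B_1\sqcup\cdots\sqcup B_{s-1}$, a decomposition $G=C_1^G\sqcup\cdots\sqcup C_{s-1}^G$ with $B_j\subseteq C_j^G$. It takes $C_1^G=B_1\cup(G\setminus G_0)$ and $C_j^G=B_j$ for $j\ge 2$, and this is the key input to Lemma~\ref{prop:Final}. So your first route, though longer for this corollary, records a device the paper genuinely needs.
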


Let \(k\in\{0,1,2,\ldots,s-1\}\) be an integer. Let \(\mathcal{W}_{k}\) be the collection of all possible polynomials of the form
\begin{equation}\label{formW}
    w(\bm{x})=\prod\limits_{j\in\sqcup_{h\in S}A_{h}}x_{j}\cdot \prod\limits_{j\in\sqcup_{h\in T}A_{h}}(1-x_{j}),
\end{equation}
where \(S\) and \(T\) are disjoint, \(S\sqcup T=[s-1]\), \(|S|=k\), and $A_1,\ldots,A_{s-1}$ are pairwise disjoint sets in $\mathcal{F}$.

Consider the vector space of functions from \(\{0,1\}^{n}\) to \(\mathbb{R}\). Notice that this is a vector space of dimension \(2^{n}\) and all of the polynomials in \(\{\mathcal{W}_{k}\}_{k\in\{0,1,\ldots,s-1\}}\) lie in this vector space. For each \(k\in\{0,1,\ldots,s-1\}\), let
\(
\operatorname{span}(\mathcal{W}_{k})
\)
denote the linear span of \(\mathcal{W}_{k}\) in this space.

For each subset \(S\subseteq [n]\), we define the indicator polynomial \(p_{S}:\{0,1\}^{n}\rightarrow \mathbb{R}\) as
\[
p_{S}(\bm{x})=\prod_{i\in S}x_{i},
\]
where \(\bm{x}=(x_{1},\ldots,x_{n})\). We will take advantage of the following result in~\cite{2018BLMS}.
\begin{prop}[Lemma~8,~\cite{2018BLMS}]\label{prop:linearly independent of p_s}
    The polynomials in \(\{p_{S}\}_{S\subseteq [n]}\) are linearly independent in \(\mathbb{R}\).
\end{prop}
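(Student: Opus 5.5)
We need to prove that the monomials $p_S(\bm{x})=\prod_{i\in S}x_i$, for $S\subseteq[n]$, are linearly independent as real-valued functions on $\{0,1\}^n$. The plan is a triangularity argument, using evaluation at characteristic vectors.

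Suppose $\sum_{S\subseteq[n]}c_S\,p_S\equiv 0$ on $\{0,1\}^n$ with real coefficients $c_S$, not all zero. Among the sets $S$ with $c_S\neq 0$, choose one, call it $S_0$, of minimum cardinality. Evaluate the relation at the characteristic vector $\bm{v}_{S_0}$ of $S_0$.

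The key observation is that $p_S(\bm{v}_T)=1$ if $S\subseteq T$ and $p_S(\bm{v}_T)=0$ otherwise. Hence evaluating at $\bm{v}_{S_0}$ gives
\[
0=\sum_{S\subseteq S_0}c_S.
\]
Every $S\subsetneq S_0$ has $|S|<|S_0|$, so by minimality $c_S=0$ for all such $S$. The sum therefore reduces to $c_{S_0}=0$, which contradicts the choice of $S_0$.

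An equivalent way to say this: the $2^n\times 2^n$ evaluation matrix $\bigl(p_S(\bm{v}_T)\bigr)_{S,T}$ is unitriangular once subsets are ordered by a linear extension of inclusion, so it is invertible. The only point needing care is the ordering by cardinality, so that all terms except $c_{S_0}$ vanish; beyond that I expect no real obstacle. As a corollary, since there are $2^n$ of these functions in a space of dimension $2^n$, they form a basis, which is presumably how the paper will use them.
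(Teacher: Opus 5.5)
Your proof is correct and complete. Evaluating at the characteristic vector of a minimum-cardinality set in the support kills every other term; this is exactly the unitriangularity of the inclusion matrix $\bigl(p_S(\bm{v}_T)\bigr)_{S,T}$, and it yields independence as functions on $\{0,1\}^n$, which is the form the paper needs. The paper gives no proof of its own and simply cites Lemma~8 of~\cite{2018BLMS}; your minimal-support evaluation argument is the standard proof of this fact.
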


By definition and by Proposition~\ref{claim:RelationFG}, for given \(k\in\{0,1,2,\ldots,s-1\}\), each \(G\in\mathcal{G}\) can correspond to at least \(\binom{s-1}{k}\) distinct polynomials in \(\mathcal{W}_{k}\) of the form~\eqref{formW}.

On the other hand, let \(\mathcal{W}_{G}\) be the collection of all possible polynomials corresponding to \(G\), then \(|\mathcal{W}_{k}\cap\mathcal{W}_{G}|\ge\binom{s-1}{k}\) for any \(G\in\mathcal{G}\) and any \(k\in\{0,1,\ldots,s-1\}\), therefore for any given \(G\in\mathcal{G}\), we have
\[
|\mathcal{W}_{G}|\ge \sum\limits_{k=0}^{s-1}\binom{s-1}{k}=2^{s-1}.
\]

We equip the vector space of functions \(f:\{0,1\}^n\to\mathbb R\) with the inner product
\[
\langle f,g\rangle := \sum_{\bm{x}\in\{0,1\}^n} f(\bm{x})g(\bm{x}).
\]
Regarding the collections \(\mathcal{W}_{k}\) and \(\mathcal{W}_{G}\), we have the following important properties.

\begin{claim}\label{claim:orthogonal of distinct w_k}
    For any distinct \(k_{1}<k_{2}\in\{0,1,2,\ldots,s-1\}\), any polynomial \(f\in\mathcal{W}_{k_{1}}\) is orthogonal to any polynomial \(g\in\mathcal{W}_{k_{2}}\).
\end{claim}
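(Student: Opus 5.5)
Write $f\in\mathcal{W}_{k_1}$ via a matching $A_1,\dots,A_{s-1}$ and split $S\sqcup T=[s-1]$ with $|S|=k_1$. Write $g\in\mathcal{W}_{k_2}$ via a matching $B_1,\dots,B_{s-1}$ and split $S'\sqcup T'=[s-1]$ with $|S'|=k_2$. Both $f$ and $g$ are products of factors $x_j$ and $1-x_j$, so they take values in $\{0,1\}$, and
\[
\langle f,g\rangle=\#\{\bm{x}\in\{0,1\}^n: f(\bm{x})=g(\bm{x})=1\}.
\]
The plan is to assume some $\bm{x}$ satisfies $f(\bm{x})=g(\bm{x})=1$ and derive a matching of size $s$ in $\mathcal{F}$.

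\textbf{Reformulation.} Let $X\subseteq[n]$ be the support of $\bm{x}$. Then $f(\bm{x})=1$ means $\bigsqcup_{h\in S}A_h\subseteq X$ and $\bigsqcup_{h\in T}A_h\subseteq [n]\setminus X$. Likewise, $g(\bm{x})=1$ means $\bigsqcup_{h\in S'}B_h\subseteq X$ and $\bigsqcup_{h\in T'}B_h\subseteq[n]\setminus X$.

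\textbf{Building the matching.} Inside $X$, the sets $B_h$ with $h\in S'$ are pairwise disjoint, and there are $k_2$ of them. Inside $[n]\setminus X$, the sets $A_h$ with $h\in T$ are pairwise disjoint, and there are $s-1-k_1$ of them. Sets in $X$ and sets in $[n]\setminus X$ are automatically disjoint from each other. So the family
\[
\{B_h\}_{h\in S'}\cup\{A_h\}_{h\in T}
\]
consists of pairwise disjoint members of $\mathcal{F}$. Its size is $k_2+(s-1-k_1)\ge s$, because $k_2>k_1$.

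\textbf{Conclusion.} This contradicts the fact that $\mathcal{F}$ contains no matching of size $s$. Hence no such $\bm{x}$ exists, and $\langle f,g\rangle=0$.

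\textbf{Potential obstacle.} The only thing to check is that the chosen sets really are members of $\mathcal{F}$ and are disjoint as listed. A set could appear twice (say $A_h=B_{h'}$), but that cannot happen here: any $A_h$ with $h\in T$ lies in $[n]\setminus X$ and any $B_{h'}$ with $h'\in S'$ lies in $X$, so they could coincide only if both were empty. The empty set is not in $\mathcal{F}$, since otherwise $\emptyset$ together with any $s-1$ disjoint sets would give a matching (and if $\emptyset\in\mathcal{F}$ the whole setting degenerates). Disjointness holds within each side by construction and across the two sides by the partition $X\sqcup([n]\setminus X)$. So there is no real obstacle.
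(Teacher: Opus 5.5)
Your proof is correct and is essentially the paper's argument. The paper notes that the $s-1-k_1$ sets on the $(1-x_j)$ side of $f$ and the $k_2$ sets on the $x_j$ side of $g$ cannot all be pairwise disjoint, since that would give a matching of size at least $s$; so their unions share an element $a$, and $f\cdot g$ contains the vanishing factor $x_a(1-x_a)$. Your pointwise argument via the support $X$ is just the contrapositive of that step, and your check that no set is counted twice (possible only if $\emptyset\in\mathcal{F}$, which is excluded when $s\le n+1$) makes explicit a detail the paper leaves implicit.
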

\begin{poc}
Without loss of generality and by suitably relabeling, we can assume those polynomials of the form
\[
f=\prod_{j\in \bigsqcup_{h=1}^{k_{1}}A_{h}}x_j\prod_{j\in \bigsqcup_{h=k_{1}+1}^{s-1}A_{h}}(1-x_j),
\]
and 
\[
g=\prod_{j\in \bigsqcup_{h=1}^{k_{2}}B_{h}}x_j\prod_{j\in \bigsqcup_{h=k_{2}+1}^{s-1}B_{h}}(1-x_j)
\]
respectively, where \(A_{1},\ldots,A_{s-1},B_{1},\ldots,B_{s-1}\in\mathcal{F}.\) It suffices to show \(f\cdot g=0.\)
Since \(k_{1}<k_{2}\), we have \(k_{2}-k_{1}+s-1\ge s\). Moreover, since \(\mathcal{F}\) does not contain a matching of size \(s\), we have
\[
(A_{k_{1}+1}\sqcup\cdots \sqcup A_{s-1}) \cap (B_{1}\sqcup\cdots \sqcup B_{k_{2}})\neq \emptyset.
\]
Then there is some element \(a\in (A_{k_{1}+1}\sqcup\cdots \sqcup A_{s-1}) \cap (B_{1}\sqcup\cdots \sqcup B_{k_{2}})\), which yields that \(f\cdot g\) has a factor \(x_{a}\cdot (1-x_{a})\). Then \(f\cdot g=0\) since \(x_{a}\in\{0,1\}\), finishing the proof.
\end{poc}
Notice that by Claim~\ref{claim:orthogonal of distinct w_k}, we have
\begin{equation}\label{equ:Span}
    2^{n}\ge \sum_{k=0}^{s-1} \dim(\operatorname{span}(\mathcal{W}_{k})).
\end{equation}

\begin{claim}
    For distinct \(G_{1},G_{2}\in\mathcal{G}\), \(\mathcal{W}_{G_{1}}\cap \mathcal{W}_{G_{2}}=\emptyset.\)
\end{claim}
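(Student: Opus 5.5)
The plan is to show that a polynomial $w$ of the form~\eqref{formW} determines, as a function on $\{0,1\}^n$, the set $G$ it came from. Two distinct members of $\mathcal{G}$ then cannot share a polynomial. Recall what it means for $w$ to correspond to $G$: by Proposition~\ref{claim:RelationFG}, $G\in\mathcal{G}=\mathcal{F}^{\Box(s-1)}$ is written as $G=A_1\sqcup\dots\sqcup A_{s-1}$ with the $A_h\in\mathcal{F}$ pairwise disjoint. The polynomial $w$ is built from this decomposition together with a split $S\sqcup T=[s-1]$. In particular, with $U_S:=\bigsqcup_{h\in S}A_h$ and $U_T:=\bigsqcup_{h\in T}A_h$, we have $U_S\sqcup U_T=G$, and
\[
w(\bm{x})=\prod_{j\in U_S}x_j\prod_{j\in U_T}(1-x_j).
\]

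The key step is to expand $w$ in the basis $\{p_R\}_{R\subseteq[n]}$. Since $U_S$ and $U_T$ are disjoint, multiplying out gives
\[
w=\sum_{R\subseteq U_T}(-1)^{|R|}\,p_{U_S\cup R}.
\]
Every set $U_S\cup R$ occurring here is a subset of $G$. The unique one of maximal size is $G$ itself, obtained for $R=U_T$, and it has coefficient $(-1)^{|U_T|}\neq 0$. By Proposition~\ref{prop:linearly independent of p_s}, the expansion of a function in the $p_R$'s is unique. Hence $G$ is recovered from $w$ as the largest set $R$ whose coefficient of $p_R$ in $w$ is nonzero. Equivalently, $G$ is the union of all $R$ with nonzero coefficient.

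Now suppose $w\in\mathcal{W}_{G_1}\cap\mathcal{W}_{G_2}$, where $w$ arises from a decomposition of $G_1$ and also, possibly via a different decomposition and split, from one of $G_2$. These two expressions define the same function on $\{0,1\}^n$. By the uniqueness just described, both $G_1$ and $G_2$ equal the maximal support set of $w$ in the $p_R$-expansion, so $G_1=G_2$. This contradicts $G_1\neq G_2$, and proves the claim.

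The only subtle point is that elements of $\mathcal{W}_k$ are regarded as functions, not as formal data $(A_1,\dots,A_{s-1},S,T)$. One therefore cannot simply read off the sets, and the argument must extract $G$ from the function alone. Proposition~\ref{prop:linearly independent of p_s} resolves this. Alternatively, one could note directly that $G$ is exactly the set of coordinates on which the nonzero function $w$ depends. Indeed, flipping $x_j$ for $j\in G$ at a point of the support of $w$ changes $w$ from $1$ to $0$, while coordinates outside $G$ do not appear in $w$ at all.
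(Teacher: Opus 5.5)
Your proof is correct and is essentially the paper's argument. You expand $w$ in the basis $\{p_R\}$, observe that $p_G$ is the unique maximal monomial with nonzero coefficient $\pm1$, and conclude by Proposition~\ref{prop:linearly independent of p_s}; the paper phrases the same idea by assuming $G_1\nsubseteq G_2$, so that $p_{G_1}$ appears in one expansion but not the other. Your closing alternative, that $G$ is exactly the set of coordinates on which the subcube indicator $w$ depends, is an equally valid and even more elementary way to finish.
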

\begin{poc}
By definition of \(\mathcal{W}_{G}\), we can see for distinct \(G_{1},G_{2}\in\mathcal{G}\), we can arbitrarily decompose them as
\[
G_{1}=A_{1}\sqcup B_{1}\quad \textup{and}\quad G_{2}=A_{2}\sqcup B_{2}
\]
respectively, where \(A_{1},A_{2},B_{1},B_{2}\) are disjoint union of some sets in \(\mathcal{F}.\) Since \(G_{1},G_{2}\) are distinct, then either \(G_{1}\nsubseteq G_{2}\) or \(G_{2}\nsubseteq G_{1}\), without loss of generality, we can assume \(G_{1}\nsubseteq G_{2}\).
Suppose that
\[
w_{G_{1}}(\bm{x})=\prod_{j\in A_1}x_j\prod_{j\in B_1}(1-x_j)=\prod_{j\in A_2}x_j\prod_{j\in B_2}(1-x_j)=w_{G_{2}}(\bm{x}).
\]
By definition we can express \(w_{G_{1}}(\bm{x})\) and \(w_{G_{2}}(\bm{x})\) as 
\[
w_{G_{1}}(\bm{x})=\prod_{j\in A_1}x_j\prod_{j\in B_1}(1-x_j)=\sum_{A_1\subseteq S\subseteq G_1}(-1)^{|S\setminus A_1|}p_S(\bm{x})=\sum_{A_1\subseteq S\nsubseteq G_1}(-1)^{|S\setminus A_1|}p_S(\bm{x})+(-1)^{|B_1|}p_{G_1}(\bm{x}),
\]
and 
\[
w_{G_{2}}(\bm{x})=\prod_{j\in A_2}x_j\prod_{j\in B_2}(1-x_j)=\sum_{A_2\subseteq S\subseteq G_2}(-1)^{|S\setminus A_2|}p_S(\bm{x}),
\]
respectively. Since \(G_{1}\nsubseteq G_{2}\), we can see \((-1)^{|B_1|}p_{G_1}(\bm{x})\) does not appear in \(w_{G_{2}}(\bm{x})\) as a monomial. Then \(w_{G_{1}}(\bm{x})=w_{G_{2}}(\bm{x})\) implies that
\begin{equation}\label{eq:Sum0}
    \sum_{A_1\subseteq S\nsubseteq G_1}(-1)^{|S\setminus A_1|}p_S(\bm{x})+(-1)^{|B_1|}p_{G_1}(\bm{x})-\sum_{A_2\subseteq S\subseteq G_2}(-1)^{|S\setminus A_2|}p_S(\bm{x})=0.
\end{equation}
However, by Proposition~\ref{prop:linearly independent of p_s}, the polynomials in \(\{p_{S}\}_{S\subseteq [n]}\) are linearly independent, which is a contradiction to~\eqref{eq:Sum0}. This finishes the proof.
\end{poc}

\begin{claim}\label{claim:SudakovsBound}
    For each \(G\in\mathcal{G}\) and each \(k\in\{0,1,\ldots,s-1\}\), take arbitrary one polynomial \(w_{k,G}\in\mathcal{W}_{G}\cap \mathcal{W}_{k}\) respectively, then the polynomials in \(\{w_{k,G}\}_{G\in\mathcal{G},k\in \{0,1,\ldots,s-1\}}\) are linearly independent.
\end{claim}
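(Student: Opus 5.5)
Since Claim~\ref{claim:orthogonal of distinct w_k} shows that polynomials from different \(\mathcal{W}_k\) are mutually orthogonal, it suffices to prove that, for each fixed \(k\in\{0,1,\ldots,s-1\}\), the polynomials \(\{w_{k,G}\}_{G\in\mathcal{G}}\) are linearly independent. Indeed, suppose \(\sum_{k}\sum_{G}c_{k,G}w_{k,G}=0\). Take the inner product with \(\sum_G c_{k_0,G}w_{k_0,G}\). Orthogonality kills all cross terms with \(k\ne k_0\), so \(\bigl\|\sum_G c_{k_0,G}w_{k_0,G}\bigr\|^2=0\). Hence \(\sum_G c_{k_0,G}w_{k_0,G}=0\) for every \(k_0\), and the problem reduces to a single level \(k\).

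For fixed \(k\), write \(w_{k,G}=\prod_{j\in A_G}x_j\prod_{j\in G\setminus A_G}(1-x_j)\), where \(A_G\) is the union of the \(k\) sets indexed by \(S\) and \(G\setminus A_G\) is the union of the remaining \(s-1-k\) sets. Expanding in the basis \(\{p_S\}\) as in the previous claim gives
\[
w_{k,G}=\sum_{A_G\subseteq S\subseteq G}(-1)^{|S\setminus A_G|}p_S .
\]
The key point is that the top monomial \(p_G\) appears in \(w_{k,G}\) with coefficient \(\pm1\). Every monomial \(p_S\) occurring in \(w_{k,G}\) satisfies \(S\subseteq G\). This gives a triangular structure with respect to inclusion.

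Now suppose \(\sum_G c_G w_{k,G}=0\) with not all \(c_G=0\). Choose \(G_0\) maximal under inclusion among those \(G\) with \(c_G\neq0\). The monomial \(p_{G_0}\) occurs in \(w_{k,G}\) only if \(G_0\subseteq G\). By maximality, the only such \(G\) with \(c_G\ne0\) is \(G_0\) itself. So the coefficient of \(p_{G_0}\) in the sum is \(\pm c_{G_0}\neq 0\), contradicting Proposition~\ref{prop:linearly independent of p_s}. Hence all \(c_G=0\).

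I expect no serious obstacle. The only care needed is to justify the reduction from the full family to a single \(k\) via orthogonality. This works because the inner product is positive definite on real-valued functions on the cube, so a vector orthogonal to itself is zero. One should also note that the choice of decomposition of \(G\) does not matter: the top coefficient is \(\pm1\) for every representative \(w_{k,G}\).
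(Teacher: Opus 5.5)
Your proof is correct and takes essentially the same route as the paper. Both arguments first reduce to a single level $k$ via Claim~\ref{claim:orthogonal of distinct w_k}, then take an inclusion-maximal $H$ with nonzero coefficient: the monomial $p_H$ occurs with coefficient $\pm\alpha_H$ only in $w_{k,H}$, which contradicts Proposition~\ref{prop:linearly independent of p_s}. Your positive-definiteness argument for the reduction step spells out a detail the paper leaves implicit.
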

\begin{poc}
By Claim~\ref{claim:orthogonal of distinct w_k}, it suffices to show that for fixed \(k\in \{0,1,2,\ldots,s-1\}\), the polynomials in \(\{w_{k,G}\}_{G\in\mathcal{G}}\) are linearly independent. Suppose that 
\begin{equation}\label{linearInd11}
    \sum_{G\in \mathcal{G}}\alpha_{G}\cdot w_{k,G}(\bm{x})=0
\end{equation}
and not all coefficients \(\alpha_{G}\) are zero. Notice that by Proposition~\ref{claim:RelationFG}, any set \(G\in\mathcal{G}\) can be expressed by \(G=X_{G}\sqcup Y_{G}\), where \(X_{G}\) and \(Y_{G}\) are disjoint union of some sets in \(\mathcal{F}\). Let \(H\) be a maximal set in \(\mathcal{G}\) such that \(\alpha_{H}\neq 0\), that is, there is no \(H'\in\mathcal{G}\) containing \(H\) such that \(\alpha_{H'}\neq 0\). Then we can see
\[
\sum_{G\in \mathcal{G}}\alpha_{G}\cdot w_{k,G}(\bm{x})=\alpha_{H}\cdot w_{k,H}(\bm{x})+\sum_{G\in \mathcal{G},H\nsubseteq G}\alpha_{G}\cdot w_{k,G}(\bm{x}).
\]
Therefore, by~\eqref{linearInd11} we have
\[
\alpha_{H}\cdot \sum_{X_{H}\subseteq S\subseteq H}(-1)^{|S\setminus X_{H}|}p_S(\bm{x}) + \sum_{G\in \mathcal{G},H\nsubseteq G}\bigg(\alpha_{G}\cdot\sum\limits_{X_{G}\subseteq S\subseteq G}(-1)^{|S\setminus X_{G}|}p_{S}(\bm{x}) \bigg)=0,
\]
which implies that
\[
\alpha_{H}\cdot (-1)^{|Y_{H}|}p_{H}(\bm{x})+
\alpha_{H}\cdot \sum_{X_{H}\subseteq S\nsubseteq H}(-1)^{|S\setminus X_{H}|}p_S(\bm{x}) + \sum_{G\in \mathcal{G},H\nsubseteq G}\bigg(\alpha_{G}\cdot\sum\limits_{X_{G}\subseteq S\subseteq G}(-1)^{|S\setminus X_{G}|}p_{S}(\bm{x}) \bigg)=0,
\]
a contradiction to Proposition~\ref{prop:linearly independent of p_s} since \(\alpha_{H}\cdot (-1)^{|Y_{H}|}\neq 0\).
\end{poc}
Observe that Claim~\ref{claim:SudakovsBound} already implies \(s\cdot|\mathcal{G}|\le 2^{n}\), which in turn yields the lower bound in~\cref{thm:2018BLMS}. We then further refine this method.

If for any \(G\in\mathcal{G}\), \(|G|> n-g(n,s)\), then \(|\mathcal{G}|\le\sum\limits_{i=n-g(n,s)}^{n}\binom{n}{i} \), we are done. Otherwise, we can take a set \(G_{0}\in\mathcal{G}\) with \(|G_{0}|\le n-g(n,s)\).  Fix this \(G_{0}\), let
\[
\mathcal{G}_{0}:=\{G\in\mathcal{G}:G_{0}\subseteq G\}.
\]
By Corollary~\ref{Cor:IncrGG}, we can see
\[
|\mathcal{G}_{0}|\ge 2^{g(n,s)}.
\]
By Proposition~\ref{claim:RelationFG}, we can take and fix some decomposition of \(G_{0}\) as 
\[
G_{0}=B_{1}\sqcup B_{2}\sqcup\cdots\sqcup B_{s-1},
\]
where \(B_{1},\ldots,B_{s-1}\in\mathcal{F}\). Also by Proposition~\ref{claim:Increasing}, for any element \(G\in \mathcal{G}_{0}\), we can select a decomposition 
\[
G=C_{1}^{G}\sqcup C_{2}^{G}\sqcup\cdots\sqcup C_{s-1}^{G},
\]
such that \(C_{1}^{G},\ldots,C_{s-1}^{G}\in\mathcal{F}\) and
\begin{equation}\label{containment:BjCj}
    B_{j}\subseteq C_{j}^{G}
\end{equation}
for any \(j\in [s-1].\)

We now state the key lemma.
\begin{lemma}\label{prop:Final}
    For any \(k\in\{0,1,2,\ldots,s-1\}\), \(\dim(\operatorname{span}(\mathcal{W}_{k}))\ge |\mathcal{G}|+\left(\binom{s-1}{k}-1\right)\cdot 2^{g(n,s)}.\)
\end{lemma}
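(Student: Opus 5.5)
We need to show $\dim\operatorname{span}(\mathcal{W}_k)\ge|\mathcal{G}|+(\binom{s-1}{k}-1)2^{g(n,s)}$. The plan is to exhibit that many linearly independent polynomials in $\mathcal{W}_k$. First, for every $G\in\mathcal{G}$ we keep one polynomial $w_{k,G}\in\mathcal{W}_G\cap\mathcal{W}_k$, exactly as in Claim~\ref{claim:SudakovsBound}; these are linearly independent. Second, for every $G\in\mathcal{G}_0$ we add $\binom{s-1}{k}-1$ further polynomials. We use the decomposition $G=C_1^G\sqcup\cdots\sqcup C_{s-1}^G$ with $B_j\subseteq C_j^G$ from~\eqref{containment:BjCj}. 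For each $k$-subset $S\subseteq[s-1]$ set
\[
w_{S,G}(\bm{x})=\prod_{j\in\bigsqcup_{h\in S}C_h^G}x_j\prod_{j\in\bigsqcup_{h\in [s-1]\setminus S}C_h^G}(1-x_j)\in\mathcal{W}_k .
\]
We choose the $w_{k,G}$ of the first step, for $G\in\mathcal{G}_0$, to be $w_{S_0,G}$ for a fixed $S_0$. The total family then consists of all $w_{S,G}$ with $G\in\mathcal{G}_0$ and $|S|=k$, together with $w_{k,G}$ for $G\in\mathcal{G}\setminus\mathcal{G}_0$. Since $|\mathcal{G}_0|\ge2^{g(n,s)}$, this gives the required count, provided we prove linear independence.

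\textbf{Linear independence.} Suppose a linear combination vanishes. We would argue via the top monomials, as in Claim~\ref{claim:SudakovsBound}. Each $w_{S,G}$ expands as $\sum_{X\subseteq T\subseteq G}\pm p_T$ with $X=\bigsqcup_{h\in S}C_h^G$, and its only monomial of top support is $\pm p_G$. Take $H$ maximal among sets carrying a nonzero coefficient. The coefficient of $p_H$ alone then only shows that the coefficients of polynomials attached to $H$ sum to zero with signs. This is not enough, because several $w_{S,H}$ share $H$ as their top set. This is the main obstacle: distinguishing the $\binom{s-1}{k}$ polynomials belonging to the same $H\in\mathcal{G}_0$.

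\textbf{Handling the obstacle.} The idea is to test the combination on suitable points rather than compare only the top monomials; this is where $G_0$ and the containments $B_j\subseteq C_j^G$ enter. For a $k$-subset $S$, let $\bm{y}_S$ be the characteristic vector of $\bigsqcup_{h\in S}B_h$ together with an appropriate set outside $G_0$. We then evaluate, or take the inner product with indicator functions supported on $\{\bm{x}:x|_{G_0}=\mathbbm{1}_{\bigsqcup_{h\in S}B_h}\}$. Since $B_h\subseteq C_h^G$, the polynomial $w_{S',G}$ vanishes on points whose restriction to $G_0$ is $\mathbbm{1}_{\bigsqcup_{h\in S}B_h}$ whenever $S'\ne S$: some $B_h$ with $h\in S\triangle S'$ receives the wrong value. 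Hence restricting the vanishing combination to the subcube where $x|_{G_0}$ equals this pattern kills all $w_{S',G}$ ($G\in\mathcal{G}_0$, $S'\neq S$). What remains is $w_{S,G}$ for $G\in\mathcal{G}_0$, plus the polynomials $w_{k,G}$ with $G\notin\mathcal{G}_0$.

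Within the surviving part there is exactly one polynomial per set $G$. We then rerun the maximal-set argument of Claim~\ref{claim:SudakovsBound} on the restricted functions, viewed as polynomials in the variables outside $G_0$ and multilinear, using Proposition~\ref{prop:linearly independent of p_s}. For $G\in\mathcal{G}_0$ the restricted top monomial is $p_{G\setminus G_0}$, which keeps distinct sets $G$ distinct. This gives that all coefficients of the $w_{S,G}$ vanish, for every $S$. Once those are zero, the remaining $w_{k,G}$ are independent by Claim~\ref{claim:SudakovsBound}.

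The delicate check is the interaction with the $w_{k,G}$, $G\notin\mathcal{G}_0$, on the subcube. Their restrictions are polynomials whose top monomial is $p_{G\setminus G_0}$ with $G\not\supseteq G_0$, or the restriction vanishes. I would handle this by ordering the maximality argument so that a maximal $H$ with nonzero coefficient is first taken among $\mathcal{G}_0$ and compared using full top monomials $p_H$ before restricting. If that ordering fails, the fallback is to restrict instead to those $G\in\mathcal{G}_0$ whose coordinates outside $G_0$ determine them.
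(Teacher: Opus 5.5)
\textbf{There is a genuine gap, in exactly the step you defer, and it cannot be closed for your family as defined.} Your family and the subcube restriction do separate the different classes $S$ correctly. On $Q_S:=\{\bm{x}: x|_{G_0}=\mathbbm{1}_{\bigsqcup_{h\in S}B_h}\}$ only the $w_{S,G}$ with that $S$ survive, which is Claim~\ref{claim:PPPP}. The problem is the representatives $w_{k,H}\in\mathcal{W}_H\cap\mathcal{W}_k$ for $H\in\mathcal{G}\setminus\mathcal{G}_0$. If they are chosen arbitrarily, linear independence is false in general, so no reordering of the maximality argument can rescue it.

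The reason is as follows. The $2^{n-|G_0|}$ functions $w_{S,G}$, $G\in\mathcal{G}_0$, are independent (Claim~\ref{claim:SudakovsBound}) and supported on $Q_S$, which has $2^{n-|G_0|}$ points. So they span \emph{all} functions on $Q_S$. Hence any nonzero combination of the $w_{k,H}$ that is supported on $\bigcup_S Q_S$ yields a dependency. The restricted top monomials $p_{H\setminus G_0}$ cannot exclude this, because they collide with each other and with $p_{G\setminus G_0}$ for $G\in\mathcal{G}_0$.

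\textbf{Counterexample.} Take $s=3$, $k=1$, $n=6$. Let $\mathcal{F}$ be a $3$-saturated extension of the up-set generated by $\{1,2\},\{3,4\},\{1,5\},\{4,5\},\{2,3,6\}$; no three of these generators are pairwise disjoint, so such an extension exists. Let $G_0=\{1,2,3,4\}$ with $B_1=\{1,2\}$ and $B_2=\{3,4\}$. Here $g(6,3)=1$, so $G_0$ is admissible. The sets
\[
H_1=\{1,2,6\}\sqcup\{4,5\},\quad H_2=\{2,3,6\}\sqcup\{4,5\},\quad H_3=\{2,3,6\}\sqcup\{1,5\},\quad H_4=\{3,4,6\}\sqcup\{1,5\}
\]
all lie in $\mathcal{G}\setminus\mathcal{G}_0$. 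Take the first block as the positive part in each. Then
\[
w_{1,H_1}-w_{1,H_2}+w_{1,H_3}-w_{1,H_4}=\delta_{\{1,2,6\}}-\delta_{\{3,4,6\}},
\]
where $\delta_Z$ is the indicator of the single point $\mathbbm{1}_Z$. These two point masses are supported in $Q_{\{1\}}$ and $Q_{\{2\}}$ respectively. Hence they lie in the spans of the $w_{\{1\},G}$ and of the $w_{\{2\},G}$, which gives a nontrivial dependency. This happens whatever $S_0$ you fix.

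\textbf{What is missing} is the paper's choice of representatives aligned with the distinguished class. Since $\mathcal{F}$ has no $s$-matching, every $B_t$ meets some block of any decomposition $H=D^H_1\sqcup\cdots\sqcup D^H_{s-1}$. So one can label the blocks so that the positive part $D^H_1\sqcup\cdots\sqcup D^H_k$ meets each of $B_1,\dots,B_k$, take $S_0=[k]$, and use this $q_{H,k}$ as the representative of $H$. Then $q_{H,k}$ vanishes on $Q_S$ for every $S\neq[k]$; equivalently $q_{H,k}\cdot p_{G,k,S}\equiv 0$, which is Claim~\ref{Claim:PQPQ}.

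With this choice a vanishing combination splits into separate pieces, either by the squaring argument or by your subcube restriction. One piece is the $[k]$-part together with all $q_{H,k}$; the others are the individual $S$-parts for $S\neq[k]$. Each piece contains at most one polynomial per set of $\mathcal{G}$, so Claim~\ref{claim:SudakovsBound} forces all coefficients to vanish. In the example above, $H_4$ and $H_1$ are exactly the misaligned representatives: their positive parts miss $B_1$ and $B_2$ respectively.
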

\begin{proof}[Proof of Lemma~\ref{prop:Final}]
    Let \(k\in\{0,1,2,\ldots,s-1\}\). By our selection, for any element \(G\in \mathcal{G}_{0}\), there is a decomposition 
\[
G=C_{1}^{G}\sqcup C_{2}^{G}\sqcup\cdots\sqcup C_{s-1}^{G},
\]
such that \(C_{1}^{G},\ldots,C_{s-1}^{G}\in\mathcal{F}\) and \(B_{j}\subseteq C_{j}^{G}\) for any \(j\in [s-1].\) Moreover, notice that there are \(\binom{s-1}{k}\) ways to partition \([s-1]\) into \([s-1]=S\sqcup T\) such that \(|S|=k\). Then for each fixed partition \([s-1]=S\sqcup T\) and each \(G\in\mathcal{G}_{0}\), we define a polynomial
\[
p_{G,k,S}(\bm{x})=\prod\limits_{j\in \bigsqcup_{h\in S}C_{h}^{G}}x_{j}\cdot \prod\limits_{j\in \bigsqcup_{h\in T}C_{h}^{G}}(1-x_{j}).
\]
On the other hand, for any \(H\in\mathcal{G}\setminus\mathcal{G}_{0}\), by Proposition~\ref{claim:RelationFG}, we can take and fix some decomposition
\[
H=D_{1}^{H}\sqcup D_{2}^{H}\sqcup\cdots\sqcup D_{s-1}^{H},
\]
where \(D_{1}^{H},\ldots,D_{s-1}^{H}\in\mathcal{F}.\) Since the matching number of \(\mathcal{F}\) is \(s-1\), then for any \(t\in [s-1]\), there exists some \(r_{t}\in [s-1]\) such that \(B_{t}\cap D_{r_{t}}^{H}\neq\emptyset.\) Therefore, for \(B_{1},B_{2},\ldots,B_{k}\), we can select at most \(k\) many distinct sets in \(\{D_{1}^{H},\ldots,D_{s-1}^{H}\}\) such that their union intersects each \(B_{h}\) for \(h\in [k]\). Without loss of generality and by suitably relabeling, we can assume that
\begin{equation}\label{RelationBDBD}
    \bigg(\bigsqcup_{j=1}^{k}D_{j}^{H}\bigg)\cap B_{h}\neq\emptyset
\end{equation}
for any \(h\in [k].\) Based on those rules of decomposition of any element in \(\mathcal{G}\setminus\mathcal{G}_{0}\), we then define a polynomial for \(H\in\mathcal{G}\setminus\mathcal{G}_{0}\) as
\[
q_{H,k}(\bm{x}):=\prod\limits_{j\in\bigsqcup_{h=1}^{k}D_{h}^{H}}x_{j}\cdot \prod\limits_{j\in\bigsqcup_{h=k+1}^{s-1}D_{h}^{H}}(1-x_{j}).
\]
It then suffices to show the linear independence of the above polynomials.
\begin{prop}\label{claim:FinalLinearInd}
   The polynomials in \(\{p_{G,k,S}:G\in \mathcal{G}_{0},S\subseteq [s-1],|S|=k\}\sqcup \{q_{H,k}: H\in \mathcal{G}\setminus\mathcal{G}_{0}\}\) defined above are linearly independent. 
\end{prop}
\begin{proof}[Proof of Proposition~\ref{claim:FinalLinearInd}]
We first show two auxiliary claims.
\begin{claim}\label{claim:PPPP}
   For any \(k\in [s-1]\), any distinct \(S\) and \(S'\) with \(|S|=|S'|=k\), and any \(G,G'\in\mathcal{G}_{0}\), \(p_{G,k,S}\) is orthogonal to \(p_{G',k,S'}\).
\end{claim}
\begin{poc}
    {Since \(S\neq S'\) and \(|S|=|S'|=k\), there exists \(h\in S\setminus S'\).
By definition, \(p_{G,k,S}(\bm{x})\) contains the factor \(\prod_{j\in C_h^{G}} x_j\), while \(p_{G',k,S'}(\bm{x})\) contains the factor \(\prod_{j\in C_h^{G'}} (1-x_j)\).
By~\eqref{containment:BjCj}, we have \(B_h\subseteq C_h^{G}\cap C_h^{G'}\), then the pointwise product \(p_{G,k,S}(\bm{x})\cdot p_{G',k,S'}(\bm{x})\) contains
\[
\prod_{j\in B_h} x_j(1-x_j),
\]
which vanishes for every \(\bm{x}\in\{0,1\}^n\).
This finishes the proof.}
\end{poc}
\begin{claim}\label{Claim:PQPQ}
    For any \(k\in [s-1]\), any \(G\in\mathcal{G}_{0}\) and any $H\in \mathcal{G}\setminus \mathcal{G}_{0}$, 
     $q_{H,k}$ is orthogonal to $p_{G,k,S}$ if $S\neq [k]$.
\end{claim}
\begin{poc}
    { Assume \(S\neq [k]\) and choose \(u\in [k]\setminus S\).
By definition, \(p_{G,k,S}(\bm{x})\) contains the factor \(\prod_{j\in C_u^{G}} (1-x_j)\),
while \(q_{H,k}(\bm{x})\) contains the factor \(\prod_{j\in \bigsqcup_{i=1}^{k} D_i^{H}} x_j\).
By \eqref{containment:BjCj} we have \(B_u\subseteq C_u^{G}\), and by \eqref{RelationBDBD} we have
\[
\bigg(\bigsqcup_{i=1}^{k} D_i^{H}\bigg)\cap B_u \neq \emptyset .
\]
Hence there exists \(t\in C_u^{G}\cap \big(\bigsqcup_{i=1}^{k} D_i^{H}\big)\).
Consequently, the pointwise product \(p_{G,k,S}(\bm{x})\, q_{H,k}(\bm{x})\) contains the factor
\(x_t(1-x_t)\), which vanishes for every \(\bm{x}\in\{0,1\}^n\).
Thus \(p_{G,k,S}\cdot q_{H,k}= 0\) on \(\{0,1\}^n\). This finishes the proof.}
\end{poc}

Now suppose that
\begin{equation}\label{eq:Assumption}
    \sum_{G\in \mathcal{G}_{0},|S|=k}\alpha_{G,S}\cdot p_{G,k,S}+\sum_{H\in \mathcal{G}\setminus\mathcal{G}_{0}}\beta_{H}\cdot q_{H,k}=0.
\end{equation}
By Claim~\ref{claim:orthogonal of distinct w_k}, Claim~\ref{claim:PPPP} and Claim~\ref{Claim:PQPQ}, we can see
\[\sum_{G\in \mathcal{G}_{0}}\alpha_{G,[k]}\cdot p_{G,k,[k]}+\sum_{H\in \mathcal{G}\setminus\mathcal{G}_{0}}\beta_{H}\cdot q_{H,k}\] is orthogonal to each $p_{G,k,S}$, where $S\neq [k]$. Therefore, we have
\[
\begin{aligned}
0&=\bigg(\sum_{G\in \mathcal{G}_{0}}\alpha_{G,[k]}\cdot p_{G,k,[k]}+\sum_{H\in \mathcal{G}\setminus\mathcal{G}_{0}}\beta_{H}\cdot q_{H,k}\bigg)\cdot\bigg(\sum_{G\in \mathcal{G}_{0},|S|=k}\alpha_{G,S}\cdot p_{G,k,S}+\sum_{H\in \mathcal{G}\setminus\mathcal{G}_{0}}\beta_{H}\cdot q_{H,k}\bigg)\\
&=\bigg(\sum_{G\in \mathcal{G}_{0}}\alpha_{G,[k]}\cdot p_{G,k,[k]}+\sum_{H\in \mathcal{G}\setminus\mathcal{G}_{0}}\beta_{H}\cdot q_{H,k}\bigg)^2.
\end{aligned}
\]
   It then follows that
     \begin{equation}\label{eq1}
         \sum_{G\in \mathcal{G}_{0}}\alpha_{G,[k]}\cdot p_{G,k,[k]}+\sum_{H\in \mathcal{G}\setminus\mathcal{G}_{0}}\beta_{H}\cdot q_{H,k}=0.
     \end{equation}
     Moreover, by~\eqref{eq:Assumption} and~\eqref{eq1}, we have
      \begin{equation}\label{eq2}
         \sum_{G\in \mathcal{G}_{0},S\in \binom{[s-1]}{k}\setminus {[k]}}\alpha_{G,S}\cdot p_{G,k,S}=0.
     \end{equation}
     For any two distinct $S_1,S_2\in \binom{[s-1]}{k}\setminus \{[k]\}$, by Claim~\ref{claim:PPPP}, we have 
     $$\bigg(\sum_{G\in \mathcal{G}_{0}}\alpha_{G,S_1}\cdot p_{G,k,S_1}\bigg)\cdot \bigg(\sum_{G\in \mathcal{G}_{0}}\alpha_{G,S_2}\cdot p_{G,k,S_2}\bigg)=0.$$
     Moreover, by~\eqref{eq2}, we have
     $$0=\bigg(\sum_{G\in \mathcal{G}_{0}}\alpha_{G,S_1}\cdot p_{G,k,S_1}\bigg)\cdot\bigg(\sum_{G\in \mathcal{G}_{0},S\in \binom{[s-1]}{k}\setminus {[k]}}\alpha_{G,S}\cdot p_{G,k,S}\bigg)=\bigg(\sum_{G\in \mathcal{G}_{0}}\alpha_{G,S_1}\cdot p_{G,k,S_1}\bigg)^2.$$
     Hence, for each $S\in \binom{[s-1]}{k}\setminus\{[k]\}$, we have
     \begin{equation}\label{eq3}
         \sum_{G\in \mathcal{G}_{0}}\alpha_{G,S}\cdot p_{G,k,S}=0.
     \end{equation}
      Note that for each $G\in \mathcal{G}$ there is at most one element of $\mathcal{W}_G$ appearing in~\eqref{eq1}, by the linear independence given by Claim~\ref{claim:SudakovsBound}, we conclude that $\alpha_{G,[k]}=0,\beta_{H}=0$ for all $G\in \mathcal{G}_0$ and $H\in \mathcal{G}\setminus\mathcal{G}_{0}$.
     Similarly, since for each $G\in \mathcal{G}_{0}$ there is at most one element of $\mathcal{W}_G$ appearing in~\eqref{eq3}, we have $\alpha_{G,S}=0$ for any $G\in \mathcal{G}_{0}$ and $S\neq [k]$. Then we can see if assumption~\eqref{eq:Assumption} holds, then all of the coefficients are zero. This finishes the proof.

\end{proof}
Then by Proposition~\ref{claim:FinalLinearInd}, we have
\[
\binom{s-1}{k}\cdot|\mathcal{G}_{0}|+(|\mathcal{G}|-|\mathcal{G}_{0}|)\le \dim(\operatorname{span}(\mathcal{W}_{k})).
\]
This finishes the proof.
\end{proof}

By Lemma~\ref{prop:Final} and~\eqref{equ:Span}, we can see 
\[
2^{n}\ge \sum\limits_{k=0}^{s-1}\dim(\operatorname{span}(\mathcal{W}_{k}))\ge s\cdot |\mathcal{G}| + \sum\limits_{k=0}^{s-1}\bigg(\binom{s-1}{k}-1\bigg)\cdot |\mathcal{G}_{0}|. 
\]
Then the desired bound in~\cref{thm:main} follows.

\section{Concluding remarks}
In this paper, we proved that every \(s\)-saturated family \(\mathcal{F}\subseteq 2^{[n]}\) satisfies
\[
|\mathcal{F}|
\;\ge\;
\Bigl(1-\frac{1}{s}+\Omega_{s}\Bigl(\frac{\log n}{n}\Bigr)\Bigr)\cdot 2^{n}.
\]
Our proof proceeds by combining the structural identity
\(
\overline{\mathcal{F}^{c}}=\mathcal{F}^{\Box (s-1)}
\)
with the celebrated Kahn--Kalai--Linial theorem, and thus reduces the problem to finding a coordinate
\(x\in [n]\) for which the section
\(
\mathcal{F}^{\Box(s-1)}(\overline{x})
\)
is noticeably smaller than half of \(\mathcal{F}^{\Box(s-1)}\).

A natural hope would be to prove the existence of a constant \(\varepsilon=\varepsilon(s)>0\) such that
\[
\min_{x\in [n]}
\bigl|\mathcal{F}^{\Box(s-1)}(\overline{x})\bigr|
\;\le\;
\Bigl(\frac12-\varepsilon\Bigr)\cdot
\bigl|\mathcal{F}^{\Box(s-1)}\bigr|
\]
holds for every \(s\)-saturated family \(\mathcal{F}\). Combined with the inequality established in the proof of \cref{thm:ViaKKL}, such a bound would immediately imply
\[
|\mathcal{F}|
\;\ge\;
\Bigl(1-\frac{1}{s}+\Omega_s(1)\Bigr)\cdot 2^{n},
\]
which would be a much stronger improvement over \cref{thm:2018BLMS}.

However, the following proposition shows that this approach cannot work in general. In particular, there is no uniform constant \(\varepsilon>0\) for which the above section bound holds for all \(s\)-saturated families.

\begin{prop}\label{prop:section-ratio-sharp}
For every fixed integer \(s\ge 2\), there exists a sequence of \(s\)-saturated families
\(
\mathcal{F}_{n}\subseteq 2^{[n]}
\)
such that
\[
\lim_{n\to\infty}
\frac{\min_{x\in [n]} \bigl|\mathcal{F}_{n}^{\Box(s-1)}(\overline{x})\bigr|}
     {\bigl|\mathcal{F}_{n}^{\Box(s-1)}\bigr|}
=
\frac{1}{2}.
\]
\end{prop}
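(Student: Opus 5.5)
We will construct, for each $s\ge 2$, an explicit $s$-saturated family whose $(s-1)$-fold disjoint occurrence $\mathcal{G}_n:=\mathcal{F}_n^{\Box(s-1)}=\overline{\mathcal{F}_n^c}$ looks like a symmetric majority-type family. For such a family every coordinate has influence only of order $1/\sqrt{n}$. Since $\mathcal{G}_n$ is increasing, the computation preceding \eqref{eq:KKL-section-bound} gives
\[
\frac{|\mathcal{G}_n(\overline{x})|}{|\mathcal{G}_n|}=\frac12-\frac{|\mathcal{G}_n(x)|-|\mathcal{G}_n(\overline{x})|}{2|\mathcal{G}_n|}.
\]
So it suffices to make all influences tend to $0$ while keeping $|\mathcal{G}_n|/2^n$ bounded away from $0$.

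\textbf{The construction.} We use the Erd\H{o}s--Kleitman-type threshold families. For simplicity take $n\equiv -1 \pmod s$, say $n=sm-1$, and let
\[
\mathcal{F}_n=\{F\subseteq[n]:|F|\ge m\}.
\]
\begin{itemize}
\item \emph{No $s$-matching.} $s$ pairwise disjoint sets of size at least $m$ would need $sm>n$ elements.
\item \emph{Maximality.} If $|F|\le m-1$, then $[n]\setminus F$ has at least $(s-1)m$ elements, so it contains $s-1$ disjoint $m$-sets; together with $F$ these form an $s$-matching. Hence $\mathcal{F}_n$ is $s$-saturated.
\item \emph{Proposition~\ref{claim:RelationFG}.} Directly, $\mathcal{G}_n=\mathcal{F}_n^{\Box(s-1)}=\{G:|G|\ge (s-1)m\}$, which indeed equals $\overline{\mathcal{F}_n^c}$.
\end{itemize}
For general $n$, not congruent to $-1$ mod $s$, we either pass to the subsequence $n\equiv -1\pmod s$ or adjust the thresholds as in Kleitman's construction. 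The limit statement only needs some sequence, so we may restrict to a subsequence and extend trivially; we will check the statement's indexing permits this.

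\textbf{The main obstacle.} With $s\ge 3$, the threshold $(s-1)m\approx \frac{s-1}{s}n$ lies far above $n/2$. Then $|\mathcal{G}_n|$ is exponentially small and the ratio $\binom{n-1}{\ge (s-1)m}/\binom{n}{\ge(s-1)m}$ tends to $1/s$, not $1/2$. So the plain threshold family fails for $s\ge3$, and we need $\mathcal{G}_n$ to have a threshold near $n/2$.

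To fix this, I would use a mixed construction. Split $[n]=X\sqcup Y$ with $|X|=n-r$, where $r$ is fixed (or grows slowly). Let $\mathcal{F}_n$ consist of all sets meeting a fixed structure on a small part that forces $s-2$ of the matching sets cheaply, together with a majority condition on the large part. Concretely, fix $s-2$ disjoint singletons $\{y_1\},\dots,\{y_{s-2}\}$, and let
\[
\mathcal{F}_n=\{F: F\cap\{y_1,\dots,y_{s-2}\}\neq\emptyset\}\cup\{F\subseteq[n]: |F\cap X|\ge |X|/2\},
\]
with $|X|$ odd. An $s$-matching can use at most $s-2$ sets of the first kind and at most one set of the second kind that avoids the $y_i$ (two such sets cannot both have majority in $X$), so it has size at most $s-1$. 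Maximality should follow similarly: if $F\notin\mathcal{F}_n$, then $F$ avoids all $y_i$ and misses majority in $X$, so the complement of $F$ in $X$ is a majority set, and $s$ disjoint sets appear. In this case
\[
\mathcal{G}_n=\bigl\{G\supseteq\{y_1,\dots,y_{s-2}\} : |G\cap X|\ge |X|/2\bigr\}\ \text{ up to the }y\text{-coordinates},
\]
which I would verify via Proposition~\ref{claim:RelationFG}. Its min section ratio is then at the $y_i$ coordinates, where it is $0$, which is bad; so I must take the minimum over $x$ into account carefully.

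This suggests instead replacing the singletons $\{y_i\}$ by large blocks $Y_i$ of growing size, each with its own majority condition. Then $\mathcal{G}_n$ is an intersection of $s-1$ independent majority events on disjoint blocks. Each coordinate's influence is $O(1/\sqrt{|\text{block}|})\to 0$, while $|\mathcal{G}_n|/2^n\to 2^{-(s-1)}>0$. This gives ratio $\to 1/2$ for every $x$, hence for the minimum.

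The step I expect to be hardest is verifying that this block-majority family is genuinely $s$-saturated, i.e.\ that the maximality and matching-freeness interact correctly across blocks. I would define
\[
\mathcal{F}_n=\{F: F \text{ contains a strict majority of some block } Y_i\}
\]
with $s-1$ blocks of odd size, partitioning $[n]$ with leftover elements handled by an extra rule. Each $s$-matching would need two disjoint majorities in one block, which is impossible. For maximality, if $F$ has no majority in any block, then its complement has a majority in every block, giving $s-1$ disjoint sets disjoint from $F$. I would then compute $\mathcal{G}_n$ explicitly and finish with the Stirling estimate for the central binomial coefficient.
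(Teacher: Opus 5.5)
Your final block-majority construction is the paper's: $s-1$ disjoint blocks of odd size $2r+1$, with $\mathcal{F}_n$ the sets containing a strict majority of some block, so $\mathcal{F}_n^{\Box(s-1)}$ consists of the sets with a majority in every block, and your influence identity gives the paper's value $\frac12-\binom{2r}{r}/2^{2r+1}$ for $x$ in a block. The one loose end, the ``extra rule'' for leftover elements, is not needed: as in the paper (with the unconstrained set $J$), leftover coordinates can simply be ignored, which keeps saturation intact, gives section ratio exactly $\frac12$ there, and removes your proposed passage to a subsequence of $n$, which would not by itself yield a limit over all $n$.
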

\begin{proof}[Proof of Proposition~\ref{prop:section-ratio-sharp}]
    Fix \(s\ge 2\). For sufficiently large \(n\), let \(m=m(n)\) be the largest odd integer such that
\(
(s-1)m\le n.
\)
Write
\(
m=2r+1.
\)
Since \(s\) is fixed, we have \(m(n)\to\infty\) and hence \(r\to\infty\) as \(n\to\infty\).

Partition \([n]\) as
\[
[n]=I_{1}\sqcup I_{2}\sqcup \cdots \sqcup I_{s-1}\sqcup J,
\]
where
\(
|I_{1}|=\cdots=|I_{s-1}|=m.
\) For each \(i\in [s-1]\), define
\[
\mathcal{H}_{i}
\coloneqq
\{A\subseteq I_{i}: |A|\ge r+1\}.
\]
Now define
\[
\mathcal{F}_{n}
\coloneqq
\bigl\{
A\subseteq [n]:
A\cap I_{i}\in \mathcal{H}_{i}\text{ for some }i\in [s-1]
\bigr\}.
\]
\begin{claim}\label{claim:FnSaturated}
    \(\mathcal{F}_{n}\) is \(s\)-saturated.
\end{claim}
\begin{poc}
  To see that \(\mathcal{F}_{n}\) contains no matching of size \(s\), suppose for contradiction that
\[
A_{1},A_{2},\dots,A_{s}\in \mathcal{F}_{n}
\]
are pairwise disjoint. For each \(j\in [s]\), choose \(i(j)\in [s-1]\) such that
\(
A_{j}\cap I_{i(j)}\in \mathcal{H}_{i(j)}.
\) Since there are only \(s-1\) possible values of \(i(j)\), two of these sets, say \(A_{p}\) and \(A_{q}\), satisfy
\(
i(p)=i(q)=t
\)
for some \(t\in [s-1]\). But then
\[
A_{p}\cap I_{t},\ A_{q}\cap I_{t}\in \mathcal{H}_{t},
\]
and each of these sets has size at least \(r+1\). Since \(|I_{t}|=2r+1\), any two subsets of \(I_t\) of size at least \(r+1\) must intersect. Hence
\[
(A_{p}\cap I_{t})\cap (A_{q}\cap I_{t})\neq\emptyset,
\]
which implies \(A_{p}\cap A_{q}\neq\emptyset\), a contradiction. 

Next we prove maximality. Let \(B\subseteq [n]\) be such that \(B\notin \mathcal{F}_{n}\). Then for every \(i\in [s-1]\),
\(
|B\cap I_{i}|\le r.
\)
Hence, for each \(i\in [s-1]\), the set
\(
C_{i}\coloneqq I_{i}\setminus B
\)
satisfies
\[
|C_{i}|=|I_i|-|B\cap I_i|\ge (2r+1)-r=r+1,
\]
so \(C_i\in \mathcal{H}_i\). Therefore \(C_i\in \mathcal{F}_n\) for every \(i\in [s-1]\). Since the blocks \(I_1,\dots,I_{s-1}\) are pairwise disjoint, the sets \(C_1,\dots,C_{s-1}\) are pairwise disjoint, and each is also disjoint from \(B\). Thus
\(
B,C_1,\dots,C_{s-1}
\)
form a matching of size \(s\). This finishes the proof.
\end{poc}
We now determine \(\mathcal{F}_{n}^{\Box(s-1)}\). By Proposition~\ref{claim:RelationFG},
\[
\mathcal{F}_{n}^{\Box(s-1)}=\overline{\mathcal{F}_{n}^{\,c}}.
\]
Therefore, for \(A\subseteq [n]\),
\(
A\in \mathcal{F}_{n}^{\Box(s-1)}
\) if and only if
\(
[n]\setminus A\notin \mathcal{F}_{n}.
\)
By the definition of \(\mathcal{F}_n\), this is equivalent to saying that for every \(i\in [s-1]\),
\(
([n]\setminus A)\cap I_i \notin \mathcal H_i.
\)
Since \(\mathcal H_i\) consists precisely of the subsets of \(I_i\) of size at least \(r+1\), the above condition is equivalent to
\(
|([n]\setminus A)\cap I_i|\le r
\)
for all \(i\in [s-1],\) which further yields that
\(
|A\cap I_i|\ge r+1
\)
for all \(i\in [s-1].\)
Hence
\[
\mathcal{F}_{n}^{\Box(s-1)}
=
\bigl\{
A\subseteq [n]:
|A\cap I_i|\ge r+1 \text{ for every }i\in [s-1]
\bigr\}.
\]
We next compute the relevant section ratios. First note that for each \(i\in [s-1]\),
\[
|\mathcal H_i|
=
\sum_{u=r+1}^{2r+1}\binom{2r+1}{u}
=
2^{2r},
\]
by symmetry of the binomial coefficients. Now fix \(x\in J\). Since membership in \(\mathcal{F}_{n}^{\Box(s-1)}\) imposes no condition on the coordinates in \(J\), exactly half of the sets in \(\mathcal{F}_{n}^{\Box(s-1)}\) avoid \(x\). Thus
\[
\frac{\bigl|\mathcal{F}_{n}^{\Box(s-1)}(\overline{x})\bigr|}
     {\bigl|\mathcal{F}_{n}^{\Box(s-1)}\bigr|}
=
\frac12.
\]

Next, fix \(x\in I_{j}\) for some \(j\in [s-1]\). Then a set in \(\mathcal{F}_{n}^{\Box(s-1)}\) avoiding \(x\) must still contain at least \(r+1\) points from \(I_j\setminus\{x\}\), which has size \(2r\). Therefore
\[
\frac{\bigl|\mathcal{F}_{n}^{\Box(s-1)}(\overline{x})\bigr|}
     {\bigl|\mathcal{F}_{n}^{\Box(s-1)}\bigr|}
=
\frac{\sum_{u=r+1}^{2r}\binom{2r}{u}}
     {\sum_{u=r+1}^{2r+1}\binom{2r+1}{u}}
=
\frac{\sum_{u=r+1}^{2r}\binom{2r}{u}}{2^{2r}}.
\]
Again by symmetry,
\(
\sum_{u=r+1}^{2r}\binom{2r}{u}
=
2^{2r-1}-\frac12\binom{2r}{r}.
\)
Hence
\[
\frac{\bigl|\mathcal{F}_{n}^{\Box(s-1)}(\overline{x})\bigr|}
     {\bigl|\mathcal{F}_{n}^{\Box(s-1)}\bigr|}
=
\frac12-\frac{1}{2^{2r+1}}\binom{2r}{r}.
\]
Then we obtain that
\[
\frac{\min_{x\in [n]} \bigl|\mathcal{F}_{n}^{\Box(s-1)}(\overline{x})\bigr|}
     {\bigl|\mathcal{F}_{n}^{\Box(s-1)}\bigr|}
=
\frac12-\frac{1}{2^{2r+1}}\binom{2r}{r}.
\]
Notice that by Stirling's formula,
\(
\frac{1}{2^{2r+1}}\binom{2r}{r}\to 0
\)
when \(r\rightarrow\infty.\)
It follows that
\[
\lim_{n\to\infty}
\frac{\min_{x\in [n]} \bigl|\mathcal{F}_{n}^{\Box(s-1)}(\overline{x})\bigr|}
     {\bigl|\mathcal{F}_{n}^{\Box(s-1)}\bigr|}
=
\frac12.
\]
This finishes the proof.
\end{proof}
Proposition~\ref{prop:section-ratio-sharp} shows that the KKL-based argument developed in this paper is close to the natural limit of what one can hope to obtain from a universal Boolean-function inequality alone. In particular, any proof of a bound of the form
\[
|\mathcal{F}|
\;\ge\;
\Bigl(1-\frac{1}{s}+\Omega_s(1)\Bigr)\cdot 2^{n}
\]
will necessarily require additional structural input that goes beyond the existence of a single coordinate with an atypically small section of \(\mathcal{F}^{\Box(s-1)}\). We therefore believe that a genuinely substantial improvement on the conjecture of Erd\H{o}s and Kleitman will require new ideas that exploit finer features of \(s\)-saturated families.

\bibliographystyle{abbrv}
\bibliography{saturated}
\end{document}